\definecolor{halfgray}{gray}{0.55} 
\definecolor{webgreen}{rgb}{0,0.5,0}
\definecolor{webbrown}{rgb}{.6,0,0} \hypersetup{%
\newcommand{\abs}[1]{\left\lvert{#1}\right\rvert}
\newcommand{\norm}[1]{\left\|{#1}\right\|}
\newcommand{\mc}{\mathcal} 
\newcommand{\R}{\mathbb{R}}\newcommand{\N}{\mathbb{N}}
\newcommand{\Z}{\mathbb{Z}}
 \newcommand{\ie}{i.e.\ }
\newcommand{\eg}{e.g.\ }
\newtheorem*{thm*}{Theorem}
\newtheorem{theorem}{Theorem}[section]
\newtheorem{proposition}[theorem]{Proposition}
\newtheorem{lemma}[theorem]{Lemma}
\newtheorem{corollary}[theorem]{Corollary}
\theoremstyle{definition} \newtheorem{definition}[theorem]{Definition}
\theoremstyle{remark}
\newcommand{\Diff}[2]{\mathrm{Diff}_{#1}^{#2}}
 \newcommand{\carr}{\righttoleftarrow}
 \DeclareMathOperator{\Fix}{Fix}
 \DeclareMathOperator{\GL}{GL}
\title[Liv\v{s}ic theorem]{Liv\v{s}ic theorem for diffeomorphism
  cocycles}
\author[A. Avila]{Artur Avila} 
\address[Avila]{CNRS, IMJ-PRG,  UMR 7586, Univ  Paris  Diderot,  Sorbonne  Paris  Cite,  Sorbonnes Universit\'es, UPMC Univ Paris 06, F-75013, Paris, France $\&$ IMPA, Estrada Dona Castorina 110, Rio de Janeiro, Brasil} 
\email{artur@math.univ-paris-diderot.fr}
\author[A. Kocsard]{Alejandro Kocsard} \address[Kocsard]{Instituto de
  Matem\'{a}tica e Estat\'{\i}stica - Universidade Federal
  Fluminense. Rua Professor Marcos Waldemar de Freitas Reis, s/n. Bloco H - Campus de Gragoat\'{a}.
24.210-201, Niter\'{o}i, RJ - Brasil}
\email{akocsard@id.uff.br}
\author[X. Liu]{Xiao-Chuan Liu}\address[Liu]{IMPA, Estrada Dona Castorina 110, Rio de Janeiro, Brasil} 
\email{lxc1984@impa.br}
\date{\today}
\begin{document}

\maketitle

\begin{abstract}
  We prove the so called Liv\v{s}ic theorem for cocycles taking values
  in the group of $C^{1+\beta}$-diffeomorphisms of any closed manifold
  of arbitrary dimension. Since no localization hypothesis is assumed,
  this result is completely global in the space of cocycles and thus
  extends the previous result of the second author and
  Potrie~\cite{KocsardPotrieLivThm} to higher dimensions.
\end{abstract}

\section{Introduction}
\label{sec:introduction}

Given a dynamical system $f\colon M\to M$ and a map $A\colon M\to G$,
where $G$ denotes a topological group, a fundamental problem in the
modern theory of dynamical systems consists in determining whether
there exists a function $u\colon M\to G$ such that
\begin{displaymath}
  A(x)=u\big(f(x)\big)\big(u(x)\big)^{-1}, \quad\forall x\in M.
\end{displaymath}
Such an equation is usually called a \emph{cohomological equation} and
when it does admit a solution, the cocycle $A$ is said to be a
\emph{coboundary} for $f$.

A celebrated theorem of Liv\v{s}ic \cite{LivCertPropHomolYSyst,
  LivsicCohomDynSys, LivHomolDynSys} claims that when $f$ is a
hyperbolic homeomorphism, the group $G$ is the additive real line $\R$
and $A$ is H\"older continuous, then there exists a continuous
solution $u\colon M\to\R$ satisfying
\begin{equation}
  \label{cohoequation}
  A(x)=u \circ f-f,
\end{equation}
if and only if
\begin{equation}
  \label{periodiccondition}
  \sum_{i=0}^{n-1}A(f^i(p))=0, \quad\forall p\in\Fix(f^n),\ \forall
  n\geq 1.
\end{equation}

Many generalizations have been studied. Some of them consider more
general dynamical systems on the base. For example, in
\cite{KatokKononenkoCocStab} and \cite{WilkinsonCohomoEq}
cohomological equations over partially hyperbolic diffeomorphisms are
studied. Other generalizations deal with cocycles taking values in
more general groups (see for instance
\cite{delaLlaveWindsorLivThmNonComm, delaLlaveWindsorLivThmNonComm}).

In this paper, we consider the second kind of generalization. When $G$
is an arbitrary group, the above condition in Liv\v{s}ic theorem
becomes
\begin{displaymath}
  A(f^{n-1}(p))\cdots A(p)=e_G,
\end{displaymath}
for any periodic point $p$ of period $n$, and where $e_G$ denotes the
identity element of the group $G$.

Liv\v{s}ic himself has already considered more general groups in
\cite{LivCertPropHomolYSyst}. In fact, in that work he deals with
cocycles taking values in a topological groups admitting a complete
bi-invariant distance (\eg Abelian or compact groups), and in this
case, the very same techniques that were applied to the real case (\ie
when $G=\R$) still work.

Looking for generalizations of these results to cocycles taking values
in groups not admitting bi-invariant distances, many authors (for
instance \cite{LivsicCohomDynSys, 
delaLlaveWindsorLivThmNonComm,
  delaLlaveWindsorSmoorhDependence}) have applied ``distortion control
techniques'' which consist in endowing $G$ with a left-invariant
distance and to estimate the distortion by right
multiplication. However, in most of the cases this method alone only
works under certain ``localization hypothesis'', \ie assuming that the
cocycle is sufficiently close to the constant identity cocycle
in certain topology, and therefore, does not produce global results.

The first global Liv\v{s}ic like theorem for topological groups not
admitting a bi-invariant distance is due to
Kalinin~\cite{KalininLivThmMat}, who proved Liv\v{s}ic theorem for
linear cocycles substituting the localization hypothesis by some
estimates of Lyapunov exponents.

In the infinite-dimensional case, that is when $G$ is a topological
group of infinite dimension, Ni\c{t}ic\u{a} and
T\"or\"ok~\cite{NiticaTorokCohomolDynSyst,
  NiticaTorokRegularResultsSolLiv} got the first local results (\ie
under some localization hypothesis) for groups of diffeomorphisms of
closed manifolds. Some extensions of these results were obtained by de
la Llave and Windsor in \cite{delaLlaveWindsorLivThmNonComm}, but
always under certain localization hypotheses. Navas and Ponce dealt in
\cite{NavasPonceLivAnalGerms} with the case of cocycles taking values
in the group of analytic germs at the origin.

The first global Liv\v{s}ic type theorem for groups of diffeomorphisms
was recently obtained by Potrie and the second author of this paper in
\cite{KocsardPotrieLivThm}. There they got a sharp characterization of
coboundaries in terms of Lyapunov exponents, introducing some new
geometric arguments and avoiding the usage of control distortion
techniques. With this characterization of coboundaries, that holds for
groups of diffeomorphisms of manifolds of any dimension, they proved a
Liv\v{s}ic type theorem for cocycles with values in the group of
circle $C^1$-diffeomorphisms.

The main result of this paper is the so called Liv\v{s}ic theorem for
H\"older cocycles taking values in the group of $C^r$-diffeomorphisms
of a closed manifold, with $r>1$, extending the previous result of
\cite{KocsardPotrieLivThm} to higher dimensions.

\subsection{Main results}
\label{sec:main-results}

All along this work, $M$ will denote a compact metric space,
$f\colon M\carr$ a hyperbolic homeomorphism, $N$ a closed smooth
manifold of dimension $q$ and $\Diff{}r(N)$ the group of
$C^r$-diffeomorphisms of $N$ (see \S~\ref{sec:prelim-notations} for
details and definitions).

The main result of this work is the following

\begin{theorem}
  \label{thm:main-thm}
  Let $A\colon M\to\Diff{}{1+\beta}(N)$ be a H\"older continuous
  cocycle, with $\beta>0$. Then, $A$ satisfies the \emph{periodic
    orbit condition,} \ie,
  \begin{equation}
    \label{eq:POC}
    A(f^{n-1}(p))\cdots A(f(p))A(p)=id_N, \quad\forall p\in\Fix(f^n),\  
    \forall n\geq 1,
  \end{equation}
  if and only if there exists a H\"older continuous map
  $u:M\to \Diff{}{1+\beta}(N)$ such that
  \begin{displaymath}
    A(x)=u\big(f(x)\big)\circ u(x)^{-1}, \quad\forall x \in M.
  \end{displaymath}
\end{theorem}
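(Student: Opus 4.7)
The forward direction is immediate: if $A(x) = u(f(x)) \circ u(x)^{-1}$, then for $p \in \Fix(f^n)$ the composition $A(f^{n-1}(p)) \cdots A(p)$ telescopes to $u(p) \circ u(p)^{-1} = id_N$, which is \eqref{eq:POC}. The bulk of the work is the converse, and my plan is to reduce the nonlinear cohomological equation to a linear Liv\v{s}ic problem for the associated fiberwise derivative cocycle. Form the skew-product homeomorphism
\[
F\colon M \times N \to M \times N, \qquad F(x,z) = (f(x), A(x)(z)),
\]
and the H\"older linear cocycle $B(x,z) := D_z A(x)$ over $F$, taking values in $\GL(T_zN) \cong \GL(q,\R)$ after trivialising $TN$. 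The POC for $A$ implies at once that along every $F$-periodic orbit the product of $B$ equals the identity, so $B$ is a H\"older linear cocycle with trivial periodic data -- but over the non-hyperbolic base $F$ rather than $f$ itself.

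The central step is to conclude from this triviality of periodic data a \emph{uniform} subexponential bound
\[
\max\bigl(\|B^{(n)}(x,z)\|,\|B^{(n)}(x,z)^{-1}\|\bigr) \le C_\varepsilon e^{\varepsilon n}, \qquad \forall (x,z)\in M\times N,\ \forall n\ge 0,
\]
valid for every $\varepsilon>0$. Over a hyperbolic base this is precisely Kalinin's theorem \cite{KalininLivThmMat}; here $F$ inherits enough structure from $f$ (shadowing, density of $f$-periodic orbits, H\"older regularity of $A$) to allow an adaptation. Any $F$-invariant probability measure projects to an $f$-invariant one, which can be approximated in the weak-$*$ topology by measures supported on $f$-periodic orbits; these lift via $A$ to $F$-periodic orbits carrying trivial $B$-periodic data. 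Together with the sub-additive ergodic theorem this forces every Lyapunov exponent of $B$ with respect to every $F$-invariant measure to vanish. Promoting this a.e.\ vanishing to the uniform bound displayed above is a further Kalinin-style argument relying on the Anosov closing lemma for $f$ and on the H\"older modulus of $A$.

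With the uniform estimate in hand, the construction of $u$ follows the classical Liv\v{s}ic scheme. Fix a point $x_0$ with dense $f$-orbit, set $u(x_0) := id_N$, and define $u(f^n(x_0)) := A(f^{n-1}(x_0)) \circ \cdots \circ A(x_0)$ along the orbit. For two orbit points $x,y$ close in $M$, the Anosov closing lemma supplies a shadowing periodic $p \in \Fix(f^n)$; telescoping
\[
u(y) \circ u(x)^{-1} = A^{(n)}(y) \circ A^{(n)}(x)^{-1}
\]
through $A^{(n)}(p) = id_N$ expresses this difference as a long composition whose $C^{1+\beta}$-size is controlled by the uniform growth of $B$, by the exponential contraction along the stable/unstable laminations of $f$, and by the H\"older exponent of $A$. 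Choosing $n$ in the usual balanced way yields a H\"older estimate that extends $u$ uniquely to a H\"older map $M \to \Diff{}{1+\beta}(N)$, which by construction solves the cohomological equation on the dense orbit and hence on all of $M$.

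The principal obstacle is the uniform subexponential estimate. The noncommutativity of $\Diff{}{1+\beta}(N)$ precludes bi-invariant distance techniques, and the base $F$ is not known to be hyperbolic, so Kalinin's theorem cannot be quoted verbatim; one has to rebuild the relevant parts of that machinery for fiberwise derivative cocycles over the skew product. This is what genuinely distinguishes the high-dimensional situation from the circle case of \cite{KocsardPotrieLivThm}, where the one-dimensional derivative cocycle reduces the necessary Lyapunov bounds to a single Birkhoff average.
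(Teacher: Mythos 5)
Your forward direction is correct, and you correctly identify the key intermediate goal: show that all fibered Lyapunov exponents of the derivative cocycle $B$ vanish for every $F$-invariant measure $\mu$, then invoke this to produce a conjugacy $u$. However, there is a genuine gap in the step where you deduce vanishing of Lyapunov exponents, and the step after it is also underestimated.

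The weak-$*$ approximation argument does not close. It is true that $\pi_*\mu$ can be approximated by $f$-periodic measures $\nu_k$, and that each periodic orbit $\{p,\ldots,f^{n-1}(p)\}$ lifts to $F$-periodic orbits since $A^n(p)=\mathrm{id}_N$ forces $\{p\}\times N \subset \Fix(F^n)$. But you have complete freedom in how to distribute mass over $\{p\}\times N$, and there is no mechanism forcing any choice of lifted measures $\tilde\nu_k$ to converge weak-$*$ to $\mu$: the lift is wildly non-unique, and $\mu$ generically has no conditional measures on the fibers over periodic points. Even if the $\tilde\nu_k$ did converge to $\mu$, the individual Lyapunov exponents are not continuous functions of the measure (only the top exponent is upper semi-continuous and the bottom lower semi-continuous), so the middle exponents of $\mu$ would not be controlled. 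This is exactly the difficulty the paper isolates and solves in \S\S 4--5 by a different mechanism: a fiber-wise Pesin theory producing ``fake invariant sets'' (Proposition~\ref{fakestablemanifold}) and from them a fiber closing lemma (Lemma~\ref{fiber-shadowing}). Given a Pesin recurrence $z_0,z_n\in\Lambda_{\eta,\ell}$ with $d(z_0,z_n)$ small, that lemma produces an actual $F$-orbit segment over a genuine $f$-periodic point $p$ which shadows the original orbit \emph{in the fiber as well}, with exponential bounds on both coordinates. The linear-algebraic lemmas of \S 3 then transfer the singular value profile of $B^{(n)}$ along the original orbit to the shadowing one, while $A^n(p)=\mathrm{id}_N$ forces all singular values of $DA^n(p)(y_0'')$ to be $1$ — a contradiction. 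The contradiction is run per measure; no global weak-$*$ approximation is needed.

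Secondly, even granting vanishing exponents (or your proposed uniform subexponential bound on $B$), the construction of $u\in\Diff{}{1+\beta}(N)$ is far from the classical telescoping scheme: one must control the $C^{1+\beta}$ distortion of arbitrarily long compositions of diffeomorphisms, for which no bi-invariant metric exists. This is precisely the content of Theorem~3.1 of \cite{KocsardPotrieLivThm}, which the paper cites (Theorem~\ref{thm:zero-Lyap-exp-vs-cobound}) rather than reproving. Your sketch of this step — defining $u$ along a dense orbit and estimating its modulus of continuity — is the right shape, but the quantitative heart of it (geometric arguments replacing distortion control) is the main result of that reference and cannot be dismissed as classical. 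You should explicitly reduce to it once the Lyapunov exponents are shown to vanish, and devote all new work to the closing lemma in the fiber direction.
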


As a consequence of Theorem~\ref{thm:main-thm} and some results due to
de la Llave and Windsor~\cite{delaLlaveWindsorSmoorhDependence}, we
get the Liv\v{s}ic theorem for groups of diffeomorphisms in higher
regularity:
\begin{corollary}
  \label{cor:livsic-higher-reg}
  Let $A\colon M\to\Diff{}{r}(N)$ be a H\"older continuous cocycle,
  with $r>1$. Then, $A$ satisfies the periodic orbit condition
  \eqref{eq:POC} if and only if there exists a H\"older continuous map
  $u\colon M\to\Diff{}{r}(N)$ such that
  \begin{displaymath}
    A(x)=u(f(x))\circ u(x)^{-1}, \quad\forall x\in M.
  \end{displaymath}
\end{corollary}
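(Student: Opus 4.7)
The plan is to derive the corollary from Theorem \ref{thm:main-thm} together with the regularity bootstrap of de la Llave and Windsor \cite{delaLlaveWindsorSmoorhDependence}. The ``only if'' direction is immediate: if $A(x) = u(f(x))\circ u(x)^{-1}$, then for any $p \in \Fix(f^n)$ the iterated product telescopes to $u(f^n(p))\circ u(p)^{-1} = \mathrm{id}_N$, regardless of the regularity class of $u$.

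For the nontrivial direction, I would first lower the regularity of the target group. Since $r > 1$, I choose any $\beta' \in (0,\min(1, r-1))$, so that $1+\beta' \leq r$ and the continuous inclusion $\Diff{}{r}(N) \hookrightarrow \Diff{}{1+\beta'}(N)$ turns $A$ into a Hölder continuous cocycle into $\Diff{}{1+\beta'}(N)$ that still satisfies \eqref{eq:POC}. Theorem \ref{thm:main-thm} then produces a Hölder continuous map $u \colon M \to \Diff{}{1+\beta'}(N)$ with $A(x) = u(f(x))\circ u(x)^{-1}$ for all $x\in M$. What remains is to upgrade, for each $x\in M$, the regularity of $u(x)$ from $C^{1+\beta'}$ to $C^r$, and to verify Hölder dependence on $x$ in the $C^r$ topology.

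For the upgrade I would invoke the smooth dependence results of \cite{delaLlaveWindsorSmoorhDependence}. Their mechanism is exactly suited to this situation: once a low-regularity solution to the cohomological equation exists and the cocycle has higher regularity, $u$ can be recovered along stable and unstable orbits of $f$ as the fixed point of a graph-transform-type operator that contracts in the $C^r$ jet topology whenever $A$ is $C^r$; combining the resulting regularity along stable and unstable holonomies through a Journé-type lemma delivers the full $C^r$ regularity of $u(x)$ together with Hölder dependence on $x$, giving the corollary.

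The conceptual work is already contained in Theorem \ref{thm:main-thm}; the only point that will need a careful reading is checking that the bootstrap of \cite{delaLlaveWindsorSmoorhDependence} is genuinely global, i.e.\ that it applies to the solution $u$ produced above without any localization hypothesis on $A$. This should be routine, because the bootstrap is pointwise in $x$ and uses only the local hyperbolic structure of $f$ together with the already-existing solution $u$ as a reference, so no closeness-to-identity assumption enters. I would make this rigorous by pinning down the precise statement from \cite{delaLlaveWindsorSmoorhDependence} and matching its hypotheses against the output of Theorem \ref{thm:main-thm}.
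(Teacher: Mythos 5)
Your proposal matches the paper's intended (and unwritten) argument: reduce to Theorem~\ref{thm:main-thm} by viewing the $C^r$ cocycle as a $C^{1+\beta'}$ one for suitable $\beta'$, then bootstrap the regularity of the transfer function $u$ via the de la Llave--Windsor smooth-dependence machinery along stable and unstable holonomies combined with a Journ\'e-type lemma. The paper only states the corollary with a pointer to \cite{delaLlaveWindsorSmoorhDependence}; your write-up correctly fills in those implicit steps, including the observation that the bootstrap is pointwise and needs no localization hypothesis.
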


\subsection*{Acknowledgments} While this work was in preparation, we
learned that M. Guysinsky was working on the very same problem, but
applying rather different techniques. We would like to thank him for
kindly sharing a draft of his work \cite{GuysinskyLivsic} with us
while A.K. and X.L. were visiting Penn State University. A.K. is very
indebted to Federico Rodríguez-Hertz for several insightful
discussions and his hospitality during that visit.  

A.K. was partially supported by CNPq-Brazil (Bolsa de Produtividade em
Pesquisa) and FAPERJ-Brazil (Jovem Cientista do Nosso Estado).
X.L. was supported by CNPq-Brazil (Bolsa de P\'os-Doutorado J\'unior).

\section{Preliminaries and notations}
\label{sec:prelim-notations}

\subsection{$C^{1+\beta}$-distance on groups of diffeomorphisms}
\label{differentialtopology}

All along this article $N$ will denote a compact Riemannian manifold
and we shall write $d_N$ for its distance function induced by the
Riemannian structure. 

We shall consider the group of $C^{1+\beta}$-diffeomorphisms of $N$,
which is denoted by $\Diff{}{1+\beta}(N)$, endowed with the distance
function $d_{C^{1+\beta}}$ which is defined as follows: first we
define the $d_{C^0}$ distance by
\begin{displaymath}
  d_{C^0}(f,g):=\max\left\{d\big(f(x),g(x)\big) : x\in N\right\},
  \quad\forall f,g\in\Diff{}{1+\beta}(N).
\end{displaymath}

Then, since $N$ is compact, there is finite atlas
$\{\phi_i\colon U_i\subset N\to\R^q,\ i=1,\ldots,m\}$ and compact sets
$K_i\subset U_i$ such that $\bigcup_{i=1}^n K_i=N$. So, given any pair
of diffeomorphisms $f,g\in\Diff{}{1+\beta}(N)$ satisfying
\begin{equation}
  \label{eq:f-g-near}
  f\circ
  g^{-1}(K_i),  g\circ f^{-1}(K_i)\subset U_i, \quad\forall
  i\in\{1,\ldots,m\}, 
\end{equation}
we define the distance $d_{C^1}$ by
\begin{displaymath}
  \begin{split}
    d_{C^1}(f,g):=d_{C^0}(f,g) + \max_{1\leq i\leq m}\left\{\max_{x\in
        \phi_i(K_i)} \norm{D(\phi_i\circ f\circ
        g^{-1}\circ\phi_i^{-1}(x))} \right\} \\
    + \max_{1\leq i\leq m}\left\{\max_{x\in
        \phi_i(K_i)} \norm{D(\phi_i \circ g\circ
        f^{-1}\circ\phi_i^{-1}(x))} \right\}.
  \end{split} 
\end{displaymath}

Then, we define the distance $d_{C^{1+\beta}}$ by
\begin{displaymath}
  \begin{split}
    &d_{C^{1+\beta}}(f,g):= d_{C^1}(f,g) + \\
    &\max_{1\leq i\leq m}\left\{\max_{x,y\in\phi_i(K_i), x\neq
        y}\frac{\norm{D(\phi_i\circ f\circ g^{-1}\circ\phi_i^{-1}(x)) -
          D(\phi_i\circ f\circ
          g^{-1}\circ\phi_i^{-1}(y)) }}{\norm{x-y}^\beta} \right\} +\\
    &\max_{1\leq i\leq m}\left\{\max_{x,y\in\phi_i(K_i), x\neq
        y}\frac{\norm{D(\phi_i\circ g\circ f^{-1}\circ\phi_i^{-1}(x)) -
          D(\phi_i\circ g\circ f^{-1}\circ\phi_i^{-1}(y))
        }}{\norm{x-y}^\beta} \right\}.
  \end{split}
\end{displaymath}

Finally, we define $d_{C^1}(f,g)= d_{C^0}(f,g) + 1$ and
$d_{C^{1+\beta}}(f,g) = d_1{C^1}(f,g) +1$, whenever condition
\eqref{eq:f-g-near} does not hold.



\subsection{Hyperbolic homeomorphisms}
\label{sec:hyperb-home}

Given a compact metric space $(M,d)$, a homeomorphism
$f\colon M\carr$ and an arbitrary point $x\in M$, let us consider the
\emph{local stable and unstable sets at $x$} given by
\begin{align*}
  W^s_\varepsilon(x)&:=\left\{y\in M:d\big(f^n(x),f^n(y)\big)
                      \leq\varepsilon,\ \forall n\geq 0\right\}; \\ 
  W^u_\varepsilon(x)&:=\left\{y\in M:d\big(f^{-n}(x),f^{-n}(y)\big) 
                      \leq\varepsilon,\ \forall n\geq 0\right\}, 
\end{align*}
where $\varepsilon$ is any positive real number.

\begin{definition}
  \label{def:hyperbolic-homeo}
  The homeomorphism $f$ is said to be \emph{hyperbolic} whenever it is
  bi-Lipschitz (\ie $f$ and $f^{-1}$ are Lipschitz), transitive and
  there exist constants $\varepsilon,\delta,K_0,\tau>0$ and continuous
  real functions $\nu_s,\nu_u\colon M\to (0,\infty)$ such that the
  following properties hold:
  \begin{enumerate}[(i)]
  \item $d(f(y'),f(y''))\leq \nu_s(y) d(y',y'')$ for any
    $y',y''\in W_\varepsilon^s(y)$ and any $y\in M$;
  \item $d(f^{-1}(y'),f^{-1}(y'')) \leq \nu_u(y) d(y',y'')$ for any
    $y',y''\in W_{\varepsilon}^u(y)$ and any $y\in M$;
  \item
    $\nu_s^{(n)} (y)=\nu_s(f^{n-1}(y))\cdots\nu_s(y) \leq K_0 e^{-\tau
      n}$ for any $y\in M$ and any $n\geq 1$;
  \item
    $\nu_u^{(n)}(y) =\nu_u (f^{-(n-1)}(y)) \cdots \nu_u(y) \leq K_0
    e^{-\tau n}$ for any $y\in M$ and any $n\geq 1$.
  \item if $d(y,y')<\delta$, then
    $W^u_\varepsilon(y)\cap W^s_\varepsilon(y')$ consists of exactly
    one point, denoted by $[y,y']$, which depends continuously on
    $(y,y')$,
  \end{enumerate}
\end{definition}

A fundamental property about hyperbolic homeomorphisms is given by the
so called Anosov closing lemma (see for instance \cite[Theorem
6.4.15]{KatokHasselblatt}):

\begin{lemma}
  \label{Anosovclosing}
  If $f\colon M\carr$ is a hyperbolic homeomorphism, then there exist
  constants $C,\delta>0$ such that for any $x\in M$ and any $n\geq 1$
  such that $d(x,f^n(x))<\delta$, there exists a unique periodic point
  $p\in\Fix(f^n)$ satisfying
  \begin{displaymath}
    d(f^i(x),f^i(p))\leq C d\big(x,f^n(x)\big)e^{-\tau \min(i,n-i)}
    \text{ for any } i=0,\cdots,n, 
  \end{displaymath}
  where $\tau$ is the constant given by
  Definition~\ref{def:hyperbolic-homeo}.
  
  Moreover, if we define $y:=[p,x]$, it holds
  \begin{displaymath}
    d(f^i(x),f^i(y))\leq C d\big(x,f^n(x)\big) e^{-\tau i},
  \end{displaymath}
  and
  \begin{displaymath}
    d(f^i(p),f^i(y))<C d\big(x,f^n(x)\big) e^{-\tau (n-i)},
  \end{displaymath}
  for any $i=0,1,\ldots,n$.
\end{lemma}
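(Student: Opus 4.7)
My plan is to prove the Anosov closing lemma by the standard argument from hyperbolic dynamics: use the local product structure from (v) of Definition~\ref{def:hyperbolic-homeo} to locate $p$ as a unique fixed point of $f^n$ in a small neighborhood of $x$ via a Banach contraction argument in local product coordinates, and then extract the exponential estimates from the contraction rates in (i)--(iv).

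\emph{Setup and existence of $p$.} First shrink $\delta$ below the constant in (v) so that the bracket $[\cdot,\cdot]$ is defined and uniformly Lipschitz on pairs at distance below $\delta$, and $(a,b)\mapsto[a,b]$ gives bi-Lipschitz local product coordinates with uniform constants. Write $\Delta:=d(x,f^n(x))$. In the product coordinates at $x$, which also cover $f^n(x)$ since $\Delta<\delta$, the map $f^n$ reads (up to bounded distortion from the coordinate change) as a hyperbolic map: it expands the unstable factor by $\geq K_0^{-1}e^{\tau n}$ and contracts the stable factor by $\leq K_0 e^{-\tau n}$, from (iii),(iv). The equation $f^n(p)=p$ becomes a pair of scalar fixed-point equations, and after inverting the expanding one both become strict contractions of ratio at most $K_0 e^{-\tau n}$ times the bounded distortion. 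Banach's theorem then yields a unique $p$ in a $C_1\Delta$-ball around $x$ satisfying $f^n(p)=p$; uniqueness among all nearby periodic points is automatic, as two such would contradict the single-point intersection in (v). A refinement of the same analysis shows that the \emph{unstable} component of $p-x$ is actually $\leq C\Delta e^{-\tau n}$: the $e^{\tau n}$-expansion in the unstable direction forces $(f^n(x))_u-p_u\approx e^{\tau n}(x_u-p_u)$, while $(f^n(x))_u-p_u$ is of order $\Delta$, so $x_u-p_u$ is of order $\Delta e^{-\tau n}$.

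\emph{Derivation of the estimates.} Set $y:=[p,x]\in W^u_\varepsilon(p)\cap W^s_\varepsilon(x)$. In the product coordinates, $y$ has the unstable coordinate of $x$ and the stable coordinate of $p$, so by the preceding refinement $d(y,p)\leq C\Delta e^{-\tau n}$ and $d(y,x)\leq C\Delta$. Since $y\in W^s_\varepsilon(x)$, properties (i) and (iii) immediately give
\begin{displaymath}
d(f^i(x),f^i(y))\leq \nu_s^{(i)}(x)\,d(x,y)\leq K_0 e^{-\tau i} C\Delta,\qquad 0\leq i\leq n,
\end{displaymath}
which is the second displayed inequality of the lemma. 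A direct computation in the same coordinates (or equivalently the near-equivariance $f^n([p,x])\approx[p,f^n(x)]$) shows that $f^n(y)\in W^u_\varepsilon(p)$ with $d(p,f^n(y))\leq C\Delta$. Since $f^n(p)=p$, applying $f^{-(n-i)}$ to the pair $\{p,f^n(y)\}\subset W^u_\varepsilon(p)$ and using (ii),(iv),
\begin{displaymath}
d(f^i(p),f^i(y))=d(f^{-(n-i)}(p),f^{-(n-i)}(f^n(y)))\leq K_0 e^{-\tau(n-i)} d(p,f^n(y))\leq C''\Delta e^{-\tau(n-i)},
\end{displaymath}
which is the third. The first (two-sided) bound follows from the triangle inequality $d(f^i(x),f^i(p))\leq d(f^i(x),f^i(y))+d(f^i(y),f^i(p))$, producing the exponent $\min(i,n-i)$ after absorbing constants.

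\emph{Main obstacle.} The genuine difficulty is the existence step: one must ensure that the coordinate changes and off-diagonal couplings of $f^n$ in local coordinates do not swamp the contraction factor $K_0 e^{-\tau n}$ from (iii),(iv). This is controlled because the relevant Lipschitz constants depend only on local data (the product structure near the orbit and the bi-Lipschitz constant of $f$) and are uniform in $n$, while $K_0 e^{-\tau n}\to 0$; hence, once $\delta$ is small enough, the Banach contraction argument closes uniformly.
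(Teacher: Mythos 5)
The paper does not actually prove this lemma: it is quoted verbatim from the literature (the citation is to \cite[Theorem 6.4.15]{KatokHasselblatt}), so there is no in-text proof to compare against. Your sketch follows the standard route of that reference (local product coordinates along the orbit plus a contraction argument), and the second half of your argument --- deducing the three displayed estimates from the existence of $p$ with $d(x,p)\le C\Delta$ and from $y=[p,x]$, by iterating (i)/(iii) along $W^s_\varepsilon(x)$ forward and (ii)/(iv) along $W^u_\varepsilon(p)$ backward from time $n$, then combining by the triangle inequality to get the $\min(i,n-i)$ exponent --- is correct.

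The existence half, however, does not close as written, for a concrete reason: the contraction ratio you exhibit is $K_0e^{-\tau n}$ (times bounded distortion), and Definition~\ref{def:hyperbolic-homeo} allows $K_0>1$, so for small $n$ (e.g.\ $n=1$) this is not a contraction and Banach's theorem does not apply; your closing remark that ``$K_0e^{-\tau n}\to0$'' only rescues large $n$, whereas the lemma is claimed for all $n\ge1$. The standard repairs are either (a) to run the fixed-point/shadowing construction on the bi-infinite periodic $\delta$-pseudo-orbit $x_j:=f^{j\bmod n}(x)$, where the relevant telescoping sums are controlled by $\sum_k K_0e^{-\tau k}<\infty$ independently of $n$, or (b) to apply your contraction argument to $f^{mn}$ for $m$ large and then use expansivity (which follows from (v), since $W^s_\varepsilon(p)\cap W^u_\varepsilon(p)=\{p\}$) to conclude that the unique shadowing fixed point of $f^{mn}$ is in fact fixed by $f^{n}$. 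Two smaller points: the manipulations ``$(f^n(x))_u-p_u\approx e^{\tau n}(x_u-p_u)$'' are only heuristic in a bare metric space and must be replaced by estimates on the bracket coordinates $z\mapsto([z,x],[x,z])$; and the uniform Lipschitz continuity of $[\cdot,\cdot]$, which you invoke to get bounds \emph{linear} in $\Delta=d(x,f^n(x))$, is not part of Definition~\ref{def:hyperbolic-homeo} (which only asserts continuity) and needs to be either justified or added as a hypothesis, as it is in the cited reference's setting.
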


\subsection{Cocycles and coboundaries}
\label{sec:cocycles-coboundaries}

Let $G$ be a topological group and $f\colon M\carr$ be a hyperbolic
homeomorphism.  Every continuous map $A\colon M \to G$ shall be
considered as a \emph{$G$-cocycle} over $f$, \ie we will use the
following notation: given any $n\in\Z$, we write
\begin{displaymath}
  A^n(x):=
  \begin{cases}
    e_G,& \text{if } n=0;\\
    A\big(f^{n-1}(x)\big)\cdots A(x),& \text{if }
    n\geq 1, \\
    A\big(f^{n}(x)\big)^{-1} \cdots A\big(f^{-1}(x)\big)^{-1},&
    \text{if } n<0.
  \end{cases}
\end{displaymath}

It is important to notice that any $G$-cocycle over $f\colon M\carr$
naturally induces a skew-product map $F=(f,A)\colon M\times G\carr$
given by
\begin{equation}
  \label{eq:skew-product-general-case}
  (f,A)(x,g):=\big(f(x),A(x)g\big),\quad\forall (x,g)\in X\times G. 
\end{equation}
Observe that in this case, it holds $(f,A)^n(x,g)=(f^n(x),A^n(x)g)$,
for every $n\in\Z$, and any $(x,g)\in M\times G$.

\begin{definition}
  A $G$-cocycle $A$ is called a \emph{coboundary} whenever there
  exists a continuous mapping $u\colon M\to G$ satisfying
  \begin{equation}
    \label{eq:cohomo-equation} 
    A(x)=u\big(f(x)\big)\cdot u(x)^{-1},\quad\forall x\in M.
  \end{equation}
\end{definition}

An obvious, yet extremely important, family of obstructions for a
$G$-cocycle to be a coboundary is given by periodic orbits of $f$.  In
fact, if $A$ is a coboundary, it necessarily holds
\begin{equation}
  \label{eq:POO}
  A^n(p)=e_G, \quad\forall n\in\N,\ \forall p\in\Fix(f^n). 
\end{equation}

In this paper we will say that a $G$-cocycle $A$ satisfies the
\emph{periodic orbit condition} (\emph{POC} for short) whenever
property \eqref{eq:POO} holds.

\subsection{Fibered Lyapunov exponents}
\label{sec:fiber-lyap-exp}

In this work we are mainly interested in the study of $G$-cocycles
with $G<\Diff{}r(N)$, where $N$ is a closed smooth manifold.

From now on and till the end of the paper we will suppose that $N$
endowed with a Riemannian structure $\langle\cdot,\cdot\rangle$, and
the norm induced by the Riemannian structure will be denoted by
$\norm{\cdot}$.

In this case, when $G$ is a subgroup of the $\Diff{}r(N)$, we can
slightly modify the skew-product given by
\eqref{eq:skew-product-general-case} defining
$F=(f,A)\colon M\times N\carr$ by
\begin{equation}
  \label{eq:skew-product-diff-group}
  F(x,y):=\big(f(x),A(x)y\big), \quad\forall (x,y)\in M\times N.  
\end{equation}

Observe that the map $F$ is $C^r$ along the vertical fibers. So, we
can consider the ``partial derivative of $F$ respect to the $N$
coordinate'', defining the linear cocycle over $F$ given by
\begin{equation}
  \label{eq:derivative-F-cocycle}
  \partial F^n_{(x,y)}v:= D\big(A^n(x)\big)(y)(v), \quad\forall x\in M,\
  \forall y \in N,\ \forall v\in T_yN,
\end{equation}
and any $n\in\Z$.

In this way we can apply Oseledets theorem to guarantee that, for
every ergodic $F$-invariant probability measure $\mu$, there exist
$k\leq\dim N$, real numbers $\lambda_1'>\lambda_2'>\ldots>\lambda_k'$
and a measurable $F$-invariant set $\Lambda\subset M\times N$ with
$\mu(\Lambda)=1$, such that for every $z=(x,y)\in\Lambda$ there exists
a linear splitting $T_yN = E^1_z\oplus E_z^2\oplus\ldots E_z^k$ which
measurably depends on $z$, satisfies $\partial F_z(E_z^i)=E_{F(z)}^i$,
for every $z\in\Lambda$ and every $1\leq i\leq k$, and
\begin{equation}
  \label{eq:lyap-exp-definition}
  \lambda_i':=\lim_{n\to\pm\infty} \frac{1}{n} \log\left\|
    \partial F^n_z(v) \right\|,
  \quad\forall z\in\Lambda,\ \forall v\in
  E_z^i\setminus\{0\}, \ \forall i\leq k.
\end{equation}

\subsection{Lyapunov exponents and coboundaries}
\label{sec:lyap-expon-vs-cobound}

As it was already shown in \cite{KocsardPotrieLivThm}, when dealing
with cocycle over hyperbolic homeomorphisms with values in groups of
diffeomorphisms there is a very tight relation between Lyapunov
exponents and coboundaries. In fact, the main result of
\cite{KocsardPotrieLivThm} is the following:

\begin{theorem}[Theorem 3.1 in \cite{KocsardPotrieLivThm}]
  \label{thm:zero-Lyap-exp-vs-cobound}
  Let $f\colon M\carr$ be a hyperbolic homeomorphism and
  $A\colon M\to\Diff{}1(N)$ be a H\"older continuous cocycle over $f$
  satisfying the POC given by \eqref{eq:POO}.

  Then the following statements are equivalent:
  \begin{enumerate}
  \item\label{eq:sub-exp-growth} Every Lyapunov exponent given by
    \eqref{eq:lyap-exp-definition} with respect to any $F$-invariant
    measure vanish, \ie it holds
    \begin{displaymath}
      \lim_{n\to\pm\infty} \frac{1}{n}\log\norm{\partial F^n_z} = 0,
      \quad\forall z\in M\times N;
    \end{displaymath}
  \item The cocycle $A$ is a coboundary, \ie there exists a H\"older
    continuous map $u\colon M\to\Diff{}1(N)$ such that
    \begin{displaymath}
      A(x) = u\big(f(x)\big)\circ u(x)^{-1},\quad\forall x\in M. 
    \end{displaymath}
  \end{enumerate}
\end{theorem}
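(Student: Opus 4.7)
\medskip

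\noindent\textbf{Plan.} I would split the proof into the easy and the hard implication, concentrating the real work on the latter.

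\smallskip

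\emph{The easy direction, $(2)\Rightarrow(1)$.} Assume $A(x)=u(f(x))\circ u(x)^{-1}$ with $u\colon M\to\Diff{}{1}(N)$ H\"older continuous. The cocycle identity collapses to $A^n(x)=u(f^n(x))\circ u(x)^{-1}$ in $\Diff{}{1}(N)$, so by the chain rule
\[
  \partial F^n_{(x,y)} \;=\; D\bigl(u(f^n x)\bigr)\bigl(u(x)^{-1}y\bigr)\circ D\bigl(u(x)^{-1}\bigr)(y).
\]
Continuity of $u$ in the $d_{C^1}$ distance and compactness of $M$ force $\sup_{x\in M}\bigl(\|Du(x)\|_\infty+\|Du(x)^{-1}\|_\infty\bigr)<\infty$, so $\|\partial F^n_z\|$ is bounded independently of $n$ and $z$; all Lyapunov exponents then vanish.

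\smallskip

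\emph{The hard direction, $(1)\Rightarrow(2)$.} My plan is to construct $u$ by stable/unstable holonomies of the cocycle, gluing them with the help of the POC through the Anosov closing lemma. I would proceed in four steps.

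\emph{Step 1 (uniform subexponential derivative growth).} Vanishing of every Lyapunov exponent for every ergodic $F$-invariant probability measure, together with the weak-$*$ compactness of the set of such measures and a semi-uniform subadditive ergodic theorem, upgrades the pointwise subexponential control to a uniform one: for every $\varepsilon>0$ there is $C_\varepsilon>0$ with
\[
  \|\partial F^{n}_{z}\|\;\leq\;C_\varepsilon\, e^{\varepsilon|n|},\qquad \forall\,z\in M\times N,\ \forall\,n\in\Z.
\]

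\emph{Step 2 (stable and unstable fibered holonomies).} For $y\in W^s_\varepsilon(x)$ I would define
\[
  H^s_{x,y}\;:=\;\lim_{n\to\infty} A^n(y)^{-1}\circ A^n(x)\quad\text{in }\Diff{}{1}(N).
\]
The telescoped increment is a conjugate, under $A^n(x)$, of the near-identity diffeomorphism $A(f^n y)^{-1}\circ A(f^n x)$. H\"older continuity of $A$ and the exponential contraction $d(f^n x,f^n y)\leq C_0 e^{-\tau n}d(x,y)$ bound its $C^1$-distance to the identity by $C_1 e^{-\alpha\tau n}$, where $\alpha$ is the H\"older exponent of $A$. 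Step~1 then tells us that conjugation by $A^n(x)$ enlarges $C^1$-distances to the identity by at most $C_\varepsilon^2 e^{2\varepsilon n}$. Choosing $\varepsilon<\alpha\tau/2$ yields a summable geometric bound on consecutive increments, proving convergence of $H^s_{x,y}$ in $\Diff{}{1}(N)$. An entirely symmetric construction with $f^{-1}$ in place of $f$ produces $H^u_{x,y}$ for $y\in W^u_\varepsilon(x)$. By design these holonomies intertwine the cocycle: $H^s_{f(x),f(y)}\circ A(x)=A(y)\circ H^s_{x,y}$, and similarly for $H^u$.

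\emph{Step 3 (gluing via the POC and the closing lemma).} Fix a transitive point $x_0$, declare $u(x_0)=\mathrm{id}_N$, and for $x$ near $x_0$ write $x=[z,w]$ with $z\in W^u_\varepsilon(x_0)$ and $w\in W^s_\varepsilon(z)$; propose
\[
  u(x)\;:=\;H^s_{z,x}\circ H^u_{x_0,z}.
\]
Extend by propagating along the $f$-orbit of $x_0$ using the cocycle rule $u(f^n x_0):=A^n(x_0)\circ u(x_0)$, and close the definition over $M$ by density. Consistency at intersections of stable/unstable rectangles, and between the orbit and holonomy definitions, reduces to the statement that the composite holonomy around any sufficiently small closed rectangle equals the identity; by the Anosov closing lemma (Lemma~\ref{Anosovclosing}), such a loop can be approximated exponentially fast by a closed orbit of $f$, on which the POC \eqref{eq:POO} gives $A^n(p)=\mathrm{id}_N$, and a further application of Step~1 plus H\"older continuity of $A$ lets one pass to the limit and conclude triviality of the loop holonomy.

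\emph{Step 4 (regularity and the cohomological equation).} The uniform convergence rates in Step~2 yield H\"older dependence $(x,y)\mapsto H^{s/u}_{x,y}$ in the $d_{C^1}$ distance, hence H\"older regularity of $u$. The intertwining relations from Step~2 combine with the orbit definition to give $A(x)=u(f(x))\circ u(x)^{-1}$ on $M$.

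\smallskip

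\emph{Main obstacle.} The heart of the proof is Step~2: because $\Diff{}{1}(N)$ admits no bi-invariant metric, conjugation can amplify $C^1$-distances uncontrollably, and this is exactly the point where classical ``localization'' hypotheses are usually imposed. The substitute here must be the uniform subexponential derivative estimate of Step~1; balancing this subexponential amplification against the exponential contraction on stable manifolds is the delicate quantitative step, and any loss of uniformity in the ergodic-theoretic upgrade of the vanishing-exponents hypothesis would break the scheme.
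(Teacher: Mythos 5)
First, a caveat: the paper you are reading does not actually prove this statement — it imports it verbatim as Theorem~3.1 of \cite{KocsardPotrieLivThm} and uses it as a black box — so there is no in-paper proof to compare against. Your easy direction $(2)\Rightarrow(1)$ is correct, and the skeleton of your hard direction (semi-uniform subadditive ergodic theorem to get $\max\{\|\partial F^n_z\|,\|(\partial F^n_z)^{-1}\|\}\le C_\varepsilon e^{\varepsilon |n|}$ uniformly, stable/unstable holonomies, gluing from a transitive point via the POC and the Anosov closing lemma) is the standard Liv\v{s}ic--Kalinin architecture that the cited proof also follows at the top level.

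The genuine gap is in Step~2, exactly at the spot you yourself single out as the main obstacle. The telescoped increment is the conjugate $g_n^{-1}\circ h_n\circ g_n$ with $g_n=A^n(x)$ and $h_n=A(f^ny)^{-1}\circ A(f^nx)$, and you claim Step~1 bounds $d_{C^1}(g_n^{-1}h_ng_n,\mathrm{id})$ by $C_\varepsilon^2e^{2\varepsilon n}d_{C^1}(h_n,\mathrm{id})$. That kind of estimate holds for the $C^0$ distance and for the operator norms of derivatives, but not for the $C^1$ distance: writing $Dh_n=I+E_n$,
\[
D\bigl(g_n^{-1}h_ng_n\bigr)(\xi)-I=\bigl[Dg_n^{-1}(h_ng_n\xi)-Dg_n^{-1}(g_n\xi)\bigr]\,Dg_n(\xi)+Dg_n^{-1}(h_ng_n\xi)\,E_n(g_n\xi)\,Dg_n(\xi),
\]
and the first term is governed by a modulus of continuity of $Dg_n^{-1}=D\bigl(A^n(x)^{-1}\bigr)$ at scale $d_{C^0}(h_n,\mathrm{id})$. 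Step~1 gives no control on this modulus uniformly in $n$: for a $C^1$ cocycle there is no quantitative modulus at all, and even for a $C^{1+\beta}$ cocycle the $\beta$-H\"older seminorm of $DA^n$ is a priori exponentially large in $n$. Hence the ``summable geometric bound on consecutive increments'' in $d_{C^1}$ does not follow, and your limits $H^s_{x,y}$ are only obtained as homeomorphisms of $N$; upgrading them (equivalently, the transfer map $u$) to $\Diff{}1(N)$ is the whole difficulty of the theorem. This is precisely why the introduction of the present paper describes \cite{KocsardPotrieLivThm} as introducing ``new geometric arguments and avoiding the usage of control distortion techniques'': your Step~2 \emph{is} a distortion-control argument, and it is known to break at this point. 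To close the argument you need an independent mechanism for the $C^1$ regularity of the holonomies — e.g.\ an invariant-section/graph-transform argument for the derivative cocycle exploiting the uniform subexponential bounds on $\partial F^{\pm n}$ — not merely the norm estimates of Step~1. Steps~3 and~4 are fine once Step~2 is repaired.
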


\noindent{\bf Remark.} We should note that, for our main statement, we have to consider $C^{1+\beta}$ topology instead of 
$C^1$ topology as in the above statement. The reason is that the proof involves techniques from Pesin theory. 

So, in order to show Theorem~\ref{thm:main-thm} it is enough to prove
the following

\begin{theorem}
  \label{thm:main-thm-Lyap-exp}
  If $f\colon M\carr$ is a hyperbolic homeomorphism and
  $A\colon M\to\Diff{}{1+\beta}(N)$ is a H\"older cocycle satisfying
  \emph{POC}, then
  \begin{displaymath}
    \lim_{n\to\pm\infty} \frac{1}{n}\log\norm{\partial F^n_z} = 0,
    \quad\forall z\in M\times N. 
  \end{displaymath}
\end{theorem}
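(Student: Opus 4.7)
The plan is to argue by contradiction, adapting the Pesin-theoretic strategy introduced by Kalinin in the linear case. Suppose there exists $z_0 = (x_0,y_0) \in M\times N$ with $\limsup_{n\to+\infty} \tfrac{1}{n}\log\norm{\partial F^n_{z_0}} > 0$; the case of a nontrivial $\liminf$ as $n\to-\infty$ is handled symmetrically, by passing to $F^{-1}$ and noting that the inverse cocycle $x\mapsto A(f^{-1}(x))^{-1}$ over $f^{-1}$ still satisfies the POC. Taking any weak-$*$ accumulation point of the empirical measures $\tfrac{1}{n}\sum_{i=0}^{n-1}\delta_{F^i(z_0)}$ and invoking the subadditive ergodic theorem, I extract an $F$-invariant ergodic probability measure $\mu$ on $M\times N$ whose top fibered Lyapunov exponent satisfies $\lambda_1' > 0$.

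Next I invoke Pesin theory in the fiber direction (legitimate since $F$ is $C^{1+\beta}$ along fibers): fix $\eps$ with $0 < \eps < \lambda_1'/3$ and choose a Pesin block $\Lambda_\ell\subset M\times N$ with $\mu(\Lambda_\ell) > 0$ on which the top Oseledets subbundle $E^1$ varies continuously, together with a constant $C_\ell > 1$ giving the uniform estimate
\[
\norm{\partial F^n_z(v)} \geq C_\ell^{-1} e^{(\lambda_1'-\eps) n}\,\norm{v}, \quad \forall\,z\in \Lambda_\ell,\ v\in E^1_z,\ n\geq 0,
\]
as well as uniformly sized Pesin unstable manifolds in the fiber. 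By Poincar\'e recurrence for $F$, select $z = (x,y)\in\Lambda_\ell$ and a sequence $n_k\to\infty$ with $F^{n_k}(z)\in\Lambda_\ell$ and $d(f^{n_k}(x),x)\to 0$. Lemma~\ref{Anosovclosing} then furnishes periodic points $p_k\in\Fix(f^{n_k})$ whose orbits exponentially shadow $\{x,f(x),\ldots,f^{n_k}(x)\}$ near both endpoints.

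The heart of the argument is a telescoping comparison between the cocycle along $x$ and along $p_k$. Because $A$ is H\"older with respect to $d_{C^{1+\beta}}$ and the shadowing is exponentially tight at the two endpoints, the series
\[
\sum_{i=0}^{n_k} d_{C^{1+\beta}}\bigl(A(f^i(x)),A(f^i(p_k))\bigr)
\]
is bounded uniformly in $k$. Working in Pesin charts adapted to the orbit of $z$, the $\beta$-H\"older control on the fiber derivatives allows one to propagate the exponential expansion from $\partial F^{n_k}_{z}$ to $\partial F^{n_k}_{(p_k,y_k)}$ for a suitably chosen point $y_k\in N$ close to the shadowed image, with only a uniform multiplicative loss. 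Consequently, for large $k$,
\[
\bigl\| D(A^{n_k}(p_k))(y_k)\bigr\| \;\geq\; \tfrac{1}{2}\,C_\ell^{-1}\,e^{(\lambda_1'-\eps)n_k} \;\longrightarrow\;\infty,
\]
while POC forces $A^{n_k}(p_k)=id_N$ and hence $\norm{D(A^{n_k}(p_k))(y_k)} = 1$, giving a contradiction.

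The principal obstacle is precisely the derivative-comparison step: small $C^1$-distance between individual cocycle values does not by itself imply that the compositions have close derivatives, since derivative errors can be amplified multiplicatively by the ambient expansion along the orbit. Overcoming this requires linearizing in Pesin charts along the orbit of $z$, where the induced linear cocycle is nearly hyperbolic on a definite scale, and then exploiting the $\beta$-H\"older modulus of $DA$ to ensure that these multiplicative errors form a convergent product (as opposed to a diverging one). This is exactly where the hypothesis $C^{1+\beta}$, rather than $C^1$ which sufficed for the one-dimensional fiber case of \cite{KocsardPotrieLivThm}, becomes indispensable, and it is the step demanding the bulk of the technical work.
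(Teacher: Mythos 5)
Your outline matches the paper's strategy in broad strokes -- contradiction, extraction of an ergodic measure with a nonzero fibered Lyapunov exponent, fiber-wise Pesin blocks, Anosov closing in the base, and a derivative comparison contradicting the POC. You also correctly identify where the difficulty lies: small $C^1$-distance between individual cocycle values does not control the composed derivatives, because errors are amplified along the orbit.

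However, there is a genuine gap in the derivative-comparison step, and it is precisely the one the paper devotes Sections 4 and 5 to closing. You write that one ``propagates the exponential expansion from $\partial F^{n_k}_{z}$ to $\partial F^{n_k}_{(p_k,y_k)}$ for a suitably chosen $y_k$ close to the shadowed image, with only a uniform multiplicative loss,'' but you never say how $y_k$ is to be produced, and this is not a matter of estimating error accumulation alone. To compare $\partial F^{n_k}$ at $z$ and at $(p_k,y_k)$ you need the \emph{entire} $F$-orbit of $(p_k,y_k)$ to stay exponentially close to that of $z$, \emph{in the fiber as well as the base}, for all of $0\leq i\leq n_k$. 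When the fiber dynamics has a nonzero exponent, an arbitrary nearby $y_k$ will not do: its fiber orbit may separate from that of $y$ at an exponential rate. The paper resolves this with the ``fake invariant sets'' of Proposition~\ref{fakestablemanifold} -- a finite-time Hadamard--Perron construction in which the scale $r^{(n)}$ is forced to shrink like $e^{-\eta\min\{n,N_0-n\}/\beta^2}$ towards the middle of the orbit segment so that, after conjugating by the tempered Lyapunov change of coordinates, each step is a uniformly small $C^1$-perturbation of its Lyapunov block form -- and then with the fiber closing lemma (Lemma~\ref{fiber-shadowing}), which produces the desired $F$-orbit $\{z_i''\}$ over the periodic base orbit by a two-step heteroclinic shadowing: first push $z_0$ along a fake stable leaf to sit over $x_0'=W^s(x_0)\cap W^u(p)$, then pull back along a fake unstable leaf to land over $p$. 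Your mention of ``convergent products'' and ``nearly hyperbolic on a definite scale'' gestures at the right phenomenon, but without this construction the proof does not go through: one cannot linearize $F$ uniformly along the orbit at a fixed scale (only a variable, exponentially shrinking scale works, and showing this shrinking is compatible with the $C^{1+\beta}$-H\"older control is exactly Lemma~\ref{flocal} together with the choice of $r^{(n)}$), and one cannot compare the two derivative products without first knowing that the fiber points stay in the regular neighborhoods where the comparison is valid.

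A secondary remark: the paper does not merely propagate the top exponent. It uses Lemmas~\ref{lem:cpt-nbh-pert-sing-values} and~\ref{lem:pertur-each-iterate} to show that \emph{all} singular values of $DA^n(p)(y_0'')$ are $\delta$-close, at the exponential scale, to the Lyapunov exponents of $\mu$. Your version, which only tracks the top exponent of $E^1_z$, suffices for the contradiction, so this is a simplification rather than a flaw; but if you pursued your variant you would still need the singular-value stability statement (or at least a one-dimensional analogue of it) to turn ``nearby cocycle along the orbit'' into ``nearby exponential growth rate,'' since the Oseledets direction at $(p_k,y_k)$ is not a priori close to $E^1_z$.
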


It is worth mentioning that in \cite{KocsardPotrieLivThm} it is
considered a slightly more general setting, dealing with \emph{à
  priori} arbitrary fiber bundles. There it is shown that the fiber
bundle is \emph{à posteriori} trivial whenever the corresponding fiber
bundle map turns to be a coboundary (see \S~2.6 in
\cite{KocsardPotrieLivThm} for more details).

Since all our techniques are either local or semi-local, we could have
worked in the fiber bundle category, getting the very same
results. However, in order to avoid cumbersome unnecessary
technicalities we have decided to present all our work in product
spaces, \ie \emph{à priori} trivial fiber bundles.

\section{Some linear algebraic lemmas}
\label{sec:alg-lemmas}

Let us consider $\R^d$ endowed with its usual Euclidean structure. Let
us denote the closed ball of radius $r$ and centered at the origin by
$B_r:=\{x\in\R^d : \norm{x}\leq r\}$ and the sphere of radius $r$ by
$\mathbb{S}_r:=\{x\in\R^d : \norm{x}=r\}$.

Given an ellipsoid $E\subset\R^d$, we shall write
$r_1(E)\geq r_2(E)\geq \cdots \geq r_d(E)$ for the ordered lengths of
its semi-axes.

Our first result of this section is the following elementary
\begin{lemma}
  \label{lem:inclus-ellipsoides}
  If $E,F\subset\R^d$ are two ellipsoids centered at the origin and
  such that $E$ is contained in the interior of $F$, then it holds
  \begin{displaymath}
    r_i(E)\leq r_i(F),\quad\forall i\in\{1,\ldots,d\}.
  \end{displaymath}
\end{lemma}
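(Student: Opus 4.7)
The plan is to encode each ellipsoid as a positive definite quadratic form and then deduce the inequality $r_i(E)\le r_i(F)$ from the monotonicity of eigenvalues under the Loewner order, via the Courant--Fischer min-max characterization.

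First I would write $E=\{x\in\R^d: \langle x,Ax\rangle\le 1\}$ and $F=\{x\in\R^d: \langle x,Bx\rangle\le 1\}$ for uniquely determined positive definite symmetric matrices $A$ and $B$. Diagonalizing in an orthonormal basis of eigenvectors shows that, if the eigenvalues of $A$ are listed in increasing order $\mu_1(A)\le \cdots\le\mu_d(A)$, then the semi-axes of $E$ listed in decreasing order are exactly $r_i(E)=\mu_i(A)^{-1/2}$, and the analogous identity holds for $F$ and $B$.

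Next I would translate the hypothesis $E\subset F$ (a consequence of $E\subset\mathrm{int}(F)$) into the pointwise quadratic form inequality $\langle x,Bx\rangle\le\langle x,Ax\rangle$ for every $x\in\R^d$. Indeed, for any nonzero $x$, the rescaled vector $y=x/\sqrt{\langle x,Ax\rangle}$ lies on $\partial E\subset F$, and evaluating the defining inequality of $F$ at $y$ yields the comparison after clearing denominators by homogeneity.

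The last step is a direct application of the Courant--Fischer theorem
\[
\mu_i(A)=\min_{\dim V=i}\ \max_{x\in V,\ \|x\|=1}\langle x,Ax\rangle,
\]
and its analogue for $B$: the pointwise domination $\langle x,Bx\rangle\le\langle x,Ax\rangle$ forces $\mu_i(B)\le\mu_i(A)$ for every $i$, hence $r_i(E)=\mu_i(A)^{-1/2}\le\mu_i(B)^{-1/2}=r_i(F)$. No step here is genuinely hard; the only point deserving attention is to keep the orderings of eigenvalues (increasing) and semi-axes (decreasing) consistent, which is routine bookkeeping rather than a real obstacle.
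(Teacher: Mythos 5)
Your proof is correct. It takes a genuinely different route from the paper's: you translate the ellipsoids into positive definite quadratic forms, observe that $E\subset F$ means the form defining $F$ is dominated pointwise by the one defining $E$, and then invoke the Courant--Fischer min-max characterization as a black box to compare eigenvalues (hence semi-axes). The paper instead argues directly with the ellipsoids: it picks orthonormal axes for $E$ and $F$, assumes by contradiction that $r_j(E)>r_j(F)$ at some index $j$, forms the $j$-dimensional subspace spanned by the $j$ longest axes of $E$ and the $(d-j+1)$-dimensional subspace spanned by the $(d-j+1)$ shortest axes of $F$, and uses the dimension count $j+(d-j+1)=d+1>d$ to find a common nonzero vector whose norm is simultaneously $\ge r_j(E)$ and $\le r_j(F)$, a contradiction. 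The two arguments are logically close --- the paper's dimension count is precisely the engine inside the proof of Courant--Fischer --- but yours is shorter and leans on a standard theorem, while the paper's is self-contained and purely geometric. One minor stylistic note: you only need and use $E\subset F$, and you correctly downgrade the hypothesis $E\subset\mathrm{int}(F)$ to this, so no issue there.
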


\begin{proof}
  Let $\{e_1,e_2,\ldots,e_d\}$ and $\{f_1,f_2,\ldots,f_d\}$ be two
  orthonormal bases of $\R^d$ such that the directions of $e_i$ and
  $f_i$ coincide with the direction of the $i^{\mathrm{th}}$-axis of
  $E$ and $F$, respectively.
    
  First notice that, since $E$ is contained in the interior of $F$, it
  clearly holds $r_1(E)\leq r_1(F)$. Reasoning by contradiction, let
  us suppose there exists $j\in\{2,\ldots,d\}$ such that
  $r_i(E)\leq r_i(F)$, for every $i\leq j-1$ and $r_j(E)>r_j(F)$. Let
  $V_{E,j}:=\mathrm{span}\{e_1,\ldots,e_j\}$ and
  $V_{F}^j:=\mathrm{span}\{f_j,\ldots,f_d\}$. Notice that
  $\|v\|\geq r_j(E)$, for all $v\in E\cap V_{E,j}$; and
  $\|v\|\leq r_j(F)$, for every $v\in F\cap V_F^j$. However,
  $\dim V_{E,j} + \dim V_F^j = j + d-j+1 = d+1$. So, there exists some
  $v\in E\cap V_{E,j}\cap V_{F}^j$ and consequently,
  $\|v\|\geq r_j(E)>r_j(F)$, contradicting the fact that $E$ is
  contained in the interior of $F$.
\end{proof}

For any $A\in\GL(d,\R)$, the image of the unit sphere
$A(\mathbb{S}_1)$ is an ellipsoid centered at the origin, and the
\emph{singular values} of $A$ are defined by
\begin{displaymath}
  \sigma_i(A):=r_i\big(A(\mathbb{S}_1)\big), \quad\text{for }
  i\in\{1,\ldots,d\}.
\end{displaymath}

It is well-known that $\sigma_i(A)^2$ is an eigenvalue for $AA^t$,
where $A^t$ denotes the transpose of $A$.

Our first perturbative result is the following:

\begin{lemma}\label{approximating}
  \label{lem:cpt-nbh-pert-sing-values}
  Let $(A_n)_{n\geq 1}$ be a sequence of matrices in $\GL(d,\R)$ and
  suppose there exist real numbers
  $\lambda_1\geq\lambda_2\geq\cdots\geq\lambda_d$ and $\delta>0$ such
  that
  \begin{displaymath}
    \abs{\frac{1}{n}\log\big(\sigma_i(A_n)\big)-\lambda_i}\leq
    \frac{\delta}{2},\quad\forall n\geq 1,\ \forall
    i\in\{1,\ldots,d\}. 
  \end{displaymath}

  Then, given any compact set $K\subset\GL(d,\R)$, there exists
  $N=N(K)>0$ such that for every pair of sequences
  $(C_n)_{n\geq 1},(D_n)_{n\geq 1}\subset K$, it holds
  \begin{displaymath}
    \abs{\frac{1}{n}\log\big(\sigma_i(C_nA_nD_n)\big)-\lambda_i}\leq
    \delta, \quad\forall n\geq N,
  \end{displaymath}
  and every $i\in\{1,\ldots,d\}$.
\end{lemma}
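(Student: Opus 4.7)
The plan is to reduce everything to the basic sub-multiplicative bound $\sigma_i(BA)\le\|B\|\,\sigma_i(A)$ and its right-multiplication analogue. Since $K\subset\GL(d,\R)$ is compact, I would first fix
\begin{displaymath}
  M:=\sup\bigl\{\|C\|,\|C^{-1}\|,\|D\|,\|D^{-1}\| : C,D\in K\bigr\}<\infty.
\end{displaymath}

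Next, I would recall the min--max characterization of singular values, namely
\begin{displaymath}
  \sigma_i(A)=\max_{\dim V=i}\min_{v\in V,\,\|v\|=1}\|Av\|,
\end{displaymath}
which, via the pointwise inequality $\|BAv\|\le\|B\|\,\|Av\|$, yields $\sigma_i(BA)\le\|B\|\,\sigma_i(A)$; an analogous computation (or passage to transposes, noting that $\sigma_i(A)=\sigma_i(A^t)$) gives $\sigma_i(AB)\le\|B\|\,\sigma_i(A)$. Applying both bounds to $C_nA_nD_n$ produces
\begin{displaymath}
  \sigma_i(C_nA_nD_n)\le\|C_n\|\,\|D_n\|\,\sigma_i(A_n)\le M^2\,\sigma_i(A_n).
\end{displaymath}
The reverse inequality is obtained by writing $A_n=C_n^{-1}(C_nA_nD_n)D_n^{-1}$ and applying the same bound, giving $\sigma_i(A_n)\le M^2\,\sigma_i(C_nA_nD_n)$. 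Together,
\begin{displaymath}
  M^{-2}\sigma_i(A_n)\le\sigma_i(C_nA_nD_n)\le M^2\sigma_i(A_n).
\end{displaymath}

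Taking $\tfrac{1}{n}\log$ of this chain and invoking the hypothesis
$\bigl|\tfrac{1}{n}\log\sigma_i(A_n)-\lambda_i\bigr|\le\tfrac{\delta}{2}$, one finds
\begin{displaymath}
  \left|\frac{1}{n}\log\sigma_i(C_nA_nD_n)-\lambda_i\right|\le\frac{\delta}{2}+\frac{2\log M}{n}.
\end{displaymath}
It is then enough to choose $N=N(K)$ so that $2\log M/N\le\delta/2$, which depends only on $K$ (through $M$) and on $\delta$, and the desired estimate follows for all $n\ge N$ and all $i\in\{1,\ldots,d\}$.

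There is no real obstacle here; the only point requiring a word of care is the right-multiplication bound $\sigma_i(AB)\le\|B\|\,\sigma_i(A)$, which is easiest to justify by transposing (singular values are preserved under transposition) and reducing it to the left-multiplication case already handled by the min--max formula. The uniformity of $N$ in the sequences $(C_n),(D_n)$ comes for free since $M$ depends only on the compact set $K$, not on the specific choices.
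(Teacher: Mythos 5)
Your argument is correct and follows essentially the same structure as the paper's: both reduce the claim to the sandwich inequality $\ell^{-2}\sigma_i(A_n)\leq\sigma_i(C_nA_nD_n)\leq\ell^2\sigma_i(A_n)$ (where $\ell$ bounds $\|C\|,\|C^{-1}\|$ over $K$), then take $\tfrac{1}{n}\log$ and choose $N$ large enough to absorb the additive error $2\log\ell/n$. The only difference is how the sandwich inequality itself is obtained: the paper derives it from its preceding Lemma~\ref{lem:inclus-ellipsoides} (nested ellipsoids have ordered semi-axes, via $\tfrac{1}{\ell}E\subset C(E)\subset\ell E$), whereas you invoke the min--max characterization of singular values together with transposition symmetry to get $\sigma_i(BA)\leq\|B\|\sigma_i(A)$ and $\sigma_i(AB)\leq\|B\|\sigma_i(A)$. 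Both routes are standard and equally short; yours bypasses the auxiliary ellipsoid lemma, while the paper's geometric lemma is reused elsewhere in the section, which is presumably why they chose it.
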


\begin{proof}
  Since $K$ is compact, there exists $\ell>1$ such that
  \begin{displaymath}
    \max\{\norm{C},\norm{C^{-1}}\}\leq\ell, \quad\forall C\in K.  
  \end{displaymath}

  So, if $E$ is an arbitrary ellipsoid centered at the origin, then
  we have $\frac{1}{\ell}E\subset C(E)\subset \ell E$, and by
  Lemma~\ref{lem:inclus-ellipsoides} it holds
  \begin{displaymath}
    \frac{1}{\ell}r_i(E) \leq r_i\big(C(E)\big)\leq \ell r_i(E),
    \quad\forall C\in K,
  \end{displaymath}
  and every $i$.

  Consequently, for any $n\geq 1$ it clearly holds
  \begin{displaymath}
    \frac{1}{\ell^2}\sigma_i(A_n)\leq \sigma_i(C_nA_nD_n)\leq
    \ell^2\sigma_i(A_n),\quad\forall n\geq 1,
  \end{displaymath}
  and every $i$.

  So,
  \begin{displaymath}
    \begin{split}
      -\frac{2\log{\ell}}{n} + \frac{1}{n}\log\sigma_i(A_n) -\lambda_i
      &\leq \frac{1}{n}\log\big(\sigma_i(C_nA_nD_n)\big)-\lambda_i \\
      & \leq \frac{2\log{\ell}}{n} + \sigma_i(A_n)-\lambda_i,
    \end{split}
  \end{displaymath}
  for every $n$ and any $i$, and taking $N:=\frac{4\log\ell}{\delta}$
  the lemma is proved.
\end{proof}

\begin{lemma}\label{singularapproximating}
  \label{lem:pertur-each-iterate}
  Let $\R^d=\bigoplus_{i=1}^k E^i$ be an orthogonal linear
  decomposition and write $d_i:=\dim E^i$. Consider real numbers
  $\lambda_1'>\lambda_2'>\ldots>\lambda_k'$, define
  \begin{displaymath}
    \kappa:=\frac{1}{2}\min_{1\leq i\leq k-1}\{\lambda_i'-\lambda_{i+1}'\},
  \end{displaymath}
  and let $\delta$ be any number satisfying $0<\delta<\kappa/2$.

  Let $(A_n)_{n\geq 1}\subset\GL(d,\R)$ be a sequence of matrices such
  that
  \begin{equation}
    \label{eq:Ei-A-inv}
    A_n(E^i)=E^i,
  \end{equation}
  and
  \begin{equation}
    \label{eq:Ei-A-norm-estimate}
    \norm{A_n\big|_{E^i}},\norm{A_n^{-1}\big|_{E^i}}^{-1}\in
    \left(e^{\lambda_i'-\frac{\delta}{4}},
      e^{\lambda_i'+\frac{\delta}{4}}\right), \quad\forall n\geq 1,
  \end{equation}
  and every $i\in\{1,\ldots,k\}$.

  Then, there exist $\alpha_1=\alpha_1(\delta)>0$ so that for every
  $(C_n)_{n\geq 1}\subset\GL(d,\R)$ satisfying
  \begin{displaymath}
    \norm{A_n-C_n}<\alpha_1,\quad\forall n\geq 1,
  \end{displaymath}
  it holds
  \begin{equation}
    \label{eq:sigmai-An-vs-Cn}
    \abs{\frac{1}{n}\log\sigma_i\Big(A^{(n)}\Big)-
      \frac{1}{n}\log\sigma_i\Big(C^{(n)}\Big)} 
    <\delta, \quad\forall n\geq 1,\ \forall i\in\{1,\ldots,k\},
  \end{equation}
  where $A^{(n)}:=\prod_{j=1}^n A_j$ and $C^{(n)}:=\prod_{j=1}^nC_j$.
\end{lemma}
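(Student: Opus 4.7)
The plan is to combine two standard tools: exterior powers, which reduce the control of all singular values to the control of a top-singular-value type quantity at each level $m_i:=d_1+\cdots+d_i$; and persistence of a dominated top direction under $C^0$-small perturbations, which gives the required uniform-in-$n$ closeness of these quantities.

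I would begin by observing that, because each $E^i$ is $A_n$-invariant and the $E^i$ are mutually orthogonal, $A^{(n)}$ is orthogonally block-diagonal, and \eqref{eq:Ei-A-norm-estimate} multiplies out to
\[
\|A^{(n)}|_{E^i}\|,\ \|(A^{(n)}|_{E^i})^{-1}\|^{-1}\in (e^{n(\lambda_i'-\delta/4)},\,e^{n(\lambda_i'+\delta/4)}).
\]
Hence the singular values of $A^{(n)}$ partition into $k$ clusters, the $i$-th consisting of $d_i$ values lying inside the above interval. Using $\sigma_1(M)\cdots\sigma_m(M)=\|\Lambda^m M\|$, estimates on the individual $\sigma_j(C^{(n)})$ will reduce, via telescoping between consecutive $m_i$'s and a Weyl-type interlacing argument, to estimates on $\|\Lambda^{m_i}C^{(n)}\|$ for each $i=1,\ldots,k$.

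I would then study each $\Lambda^{m_i}A_n$ separately. The line $\Lambda^{m_i}V_i=\R\xi_i$, with $V_i:=E^1\oplus\cdots\oplus E^i$, is $\Lambda^{m_i}A_n$-invariant and the action on it is scalar multiplication by $\det(A_n|_{V_i})$. Any second singular value of $\Lambda^{m_i}A_n$ is realized by exchanging one wedge factor of $V_i$ for a unit vector of $V_i^\perp$, which by \eqref{eq:Ei-A-norm-estimate} costs a ratio of at most $e^{\lambda_{i+1}'-\lambda_i'+\delta/2}\le e^{-2\kappa+\delta/2}$; since $\delta<\kappa/2$, this produces a \emph{uniform} spectral gap of size at least $e^{7\kappa/4}$ on every $\Lambda^{m_i}A_n$. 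I would then choose $\alpha_1=\alpha_1(\delta)>0$ small enough that $\|\Lambda^{m_i}C_n-\Lambda^{m_i}A_n\|$ is negligible compared with this gap for every $i$, and apply a standard invariant-cone/graph-transform argument around $\xi_i$ to produce, for each $n$, a one-dimensional $\Lambda^{m_i}C^{(n)}$-invariant direction close to $\xi_i$ along which the growth rate of $\Lambda^{m_i}C^{(n)}$ differs from $\log|\det(A^{(n)}|_{V_i})|$ by at most $n\delta/(2k)$. Feeding this back into the reduction of the previous paragraph would yield \eqref{eq:sigmai-An-vs-Cn}.

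The main obstacle is precisely the uniformity in $n$: a direct norm estimate gives only $\|C^{(n)}-A^{(n)}\|\le n\alpha_1 e^{n(\lambda_1'+\delta/4)}$, which amplifies the perturbation far beyond the smaller singular values and makes them invisible to any naive approach. The cone argument avoids this by processing one step at a time, using the gap preserved by each $\Lambda^{m_i}C_n$; the strict inequality $\delta<\kappa/2$ enters exactly here, guaranteeing a gap on every exterior power (not only on the original matrices) so that the cone field is non-degenerate and invariant at every level $m_i$.
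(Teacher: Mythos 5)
Your strategy shares the essential mechanism with the paper's proof: a per-step invariant-cone (domination) argument that bypasses the $n$-fold amplification of the perturbation and gives uniform-in-$n$ control. You correctly identify that a direct bound $\|C^{(n)}-A^{(n)}\|\le n\alpha_1 e^{n(\lambda_1'+\delta/4)}$ is useless for the lower singular values, and your computation of the spectral gap on $\Lambda^{m_i}A_n$ (ratio $\le e^{-2\kappa+\delta/2}\le e^{-7\kappa/4}$) is correct. The difference is the way the cone argument is packaged: the paper works directly in $\R^d$, constructing a nested flag $\R^d=H_1\supset H_2\supset\cdots\supset H_k$ with $\dim H_j=d-m_{j-1}$ by intersecting pullbacks of slow cones, and obtains both upper and lower growth bounds for vectors in $H_j\setminus H_{j+1}$; you pass to the exterior powers $\Lambda^{m_i}$ and control only the top direction.

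There is, however, a genuine gap in the reduction step ``estimates on the individual $\sigma_j(C^{(n)})$ will reduce, via telescoping between consecutive $m_i$'s and a Weyl-type interlacing argument, to estimates on $\|\Lambda^{m_i}C^{(n)}\|$.'' What $\|\Lambda^{m_i}C^{(n)}\|$ gives you, after telescoping, is $\prod_{l=m_{i-1}+1}^{m_i}\sigma_l(C^{(n)})$, i.e.\ the \emph{product} of the singular values within the $i$-th cluster. When some multiplicity $d_i>1$, this product does not pin down the individual $\sigma_l(C^{(n)})$: knowing $\sigma_1\cdots\sigma_{m_{i-1}}$ and $\sigma_1\cdots\sigma_{m_i}$ is compatible with the cluster being spread over a range $e^{\pm cn}$ while keeping the same product. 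The Weyl perturbation inequality $|\sigma_l(A)-\sigma_l(B)|\le\|A-B\|$ cannot supply the missing within-cluster control for the same reason you already cite (exponential amplification), and Cauchy-type interlacing requires a distinguished subspace to restrict to — which is exactly the flag that you have not constructed. To rescue the exterior-power route you would need to control $\|\Lambda^{m}C^{(n)}\|$ for \emph{all} $m$, not just $m=m_i$; but for $m_{i-1}<m<m_i$, the top direction of $\Lambda^{m}A_n$ is not dominated (the top $d_i$ singular values of $\Lambda^{m_{i-1}+1}A_n$ are all comparable), so the one-dimensional cone/graph-transform argument you describe does not apply. You would have to upgrade it to a dominated splitting of rank $d_i$ at those levels and exploit that $A_n$ acts nearly conformally on the corresponding block — at which point you have essentially reconstructed the paper's flag $H_j$ with two-sided norm and co-norm estimates, rather than avoided it.

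In short: the cone/domination idea and the identification of the key difficulty are right, and the exterior-power framing is a legitimate alternative route, but as written the argument proves only the cluster products $\prod_{l\in\text{cluster }i}\sigma_l(C^{(n)})\approx e^{nd_i\lambda_i'}$, not the individual $\sigma_l(C^{(n)})\approx e^{n\lambda_i'}$ whenever $d_i>1$. You need either the nested invariant flag in $\R^d$ (as the paper does), or a higher-rank dominated-splitting argument at the intermediate exterior powers, to close this gap.
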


\begin{proof}
  Given any $v\in\R^d$, we write $v=\sum_{i=1}^k v^i$ for its unique
  decomposition with $v^i\in E^i$, for each $i$.

  For each $j\in\{1,\ldots,k-1\}$ and $\gamma>0$, let us consider the
  cones
  \begin{displaymath}
    K_{\gamma}^j:=\left\{v\in\R^d: \bigg\|\sum_{i\geq j+1}
      v^i\bigg\|<\gamma\bigg\|\sum_{i\leq j} v^i\bigg\|\right\},
  \end{displaymath}
  and
  \begin{displaymath}
    K_{j,\gamma}:=\left\{v\in\R^d: \bigg\|\sum_{i\leq j}
      v^i\bigg\|<\gamma\bigg\|\sum_{i\geq j+1} v^i\bigg\|\right\}.
  \end{displaymath}

  For each $j\in\{1,\ldots,k-1\}$ and any $n$, since we are assuming
  the linear decomposition $\R^d=\bigoplus_i E^i$ is orthogonal, it
  holds
  \begin{displaymath}
    \begin{split}
      \norm{A_n \sum_{i\leq j}v^i}^2&=\sum_{i\leq j}\norm{A_nv^i}^2
      \geq \left(e^{\lambda_j'-\delta/2}\right)^2\norm{\sum_{i\leq j}v^i}^2 \\
      &> (e^{\lambda_{j+1}'+\kappa})^2
      \norm{\frac{1}{\gamma}\sum_{i\geq j+1}v^i}^2 >
      \left(\frac{e^{\kappa-\delta/2}}{\gamma}\right)^2
      \norm{A_n\sum_{i\geq j+1} v^i}^2,
    \end{split}
  \end{displaymath}
  for every $v\in K_\gamma^j$. Analogously, for every
  $v\in K_{j,\gamma}$ we get
  \begin{displaymath}
    \begin{split}
      \norm{A_n^{-1}\sum_{i\leq j}v^i}^2&=\sum_{i\leq
        j}\norm{A_n^{-1}v^i}^2
      \leq \left(e^{\lambda_j'-\delta/3}\right)^{-2}\norm{\sum_{i\leq j}v^i}^2 \\
      &< (e^{\lambda_{j+1}'+\kappa+\delta/12})^{-2}
      \norm{\gamma\sum_{i\geq j+1}v^i}^2 <
      \left(\frac{\gamma}{e^{\kappa-\delta/4}}\right)^2
      \norm{A_n^{-1}\sum_{i\geq j+1} v^i}^2.
    \end{split}
  \end{displaymath}

  This implies that
  \begin{equation}
    \label{eq:cone-inclusions}
    A_n\big(K_\gamma^j\big)\subset K_{\gamma
      e^{-\kappa+\delta/4}}^j,\quad\text{and}\quad
    A_n^{-1}\big(K_{j,\gamma}\big)\subset K_{j,\gamma e^{-\kappa+\delta/4}},
  \end{equation}
  and for every $\gamma>0$, each $j\in\{1,\ldots,d-1\}$ and any
  $n\geq 1$.

  Then let us fix $\gamma>0$ sufficiently small such that
  \begin{equation}
    \label{eq:growth-in-cones}
    \norm{A_nv}\leq e^{\lambda_{j+1}'+\delta/3}\norm{v},
    \quad\text{and}\quad \norm{A_n^{-1} u}\geq
    e^{\lambda_{j}'-\delta/3}\norm{u}
  \end{equation}
  for every $j<k$, any $v\in K_{j,\gamma}$, all $u\in K^j_{\gamma}$
  and every $n\geq 1$.

  After fixing $\gamma$ as above, and taking into account
  \eqref{eq:cone-inclusions} and \eqref{eq:growth-in-cones}, we choose
  $\alpha_1>0$ small enough such that for any sequence of matrices
  $(C_n)_{n\geq 1}\subset\GL(d,\R)$ satisfying $\norm{A_n-C_n}<\alpha_1$
  for every $n$, it holds
  \begin{equation}
    \label{eq:Cn-conorm-estimate}
    \norm{C_n^{-1}}^{-1}\geq e^{\lambda'_k-\delta/2},
  \end{equation}
  \begin{equation}
    \label{eq:cone-inclusios-for-B}
    C_n\big(K_\gamma^j\big)\subset K_{\gamma e^{-\kappa+\delta/2}}^j,
    \quad\text{and}\quad C_n^{-1}\big(K_{j,\gamma}\big)\subset
    K_{j,\gamma e^{-\kappa+\delta/2}}.
  \end{equation}
  and
  \begin{equation}
    \label{eq:growth-in-cones-for-B}
    \norm{C_nv}\leq e^{\lambda_{j+1}'+\delta/2}\norm{v},
    \quad\text{and}\quad \norm{C_n^{-1} u}\geq
    e^{\lambda_{j}'-\delta/2}\norm{u},
  \end{equation}
  for every $j\in\{1,\ldots,d-1\}$, any $n\geq 1$, every
  $v\in K_{j,\gamma}$ and all $u\in K^j_\gamma$.

  Then, for any such sequence $(C_n)_{n\geq 1}$, let us define
  \begin{equation}
    \label{eq:Hk-def}
    H_k:=\bigcap_{n\geq 1}
    \big(C^{(n)}\big)^{-1}\big(\overline{K_{k-1,\gamma}}\big).
  \end{equation}

  By classical arguments one can easily show that $H_k$ is a linear
  subspace satisfying $\dim H_k=\dim E^k$. Putting together
  \eqref{eq:Cn-conorm-estimate}, \eqref{eq:growth-in-cones-for-B} and
  \eqref{eq:Hk-def} one can easily verify that
  \begin{displaymath}
    e^{n(\lambda_k -\delta/2)}\norm{v}\leq \norm{C^{(n)}v}\leq
    e^{n(\lambda_k'+\delta/2)}\norm{v}, \quad\forall v\in H^k.
  \end{displaymath}

  On the other hand, for every $v\in\R^d\setminus H^k$, there exists
  $N\in\N$ such that $C^{(n)}(v)\in K^{k-1}_\gamma$, and thus one gets
  \begin{displaymath}
    \liminf_{n\to+\infty} \frac{1}{n} \log\norm{C^{(n)}(v)} \geq
    \lambda'_{k-1}-\delta>\lambda'_k+\delta.
  \end{displaymath}

  Then, we define
  \begin{displaymath}
    H_{k-1}:=\bigcap_{n\geq 1} \big(C^{(n)}\big)^{-1}
    \big(\overline{K_{k-2,\gamma}}\big),
  \end{displaymath}
  and it can be shown that $H_{k-1}\supset H_k$ is a linear subspace
  satisfying $\dim H_{k-1}=\dim E^k+\dim E^{k-1}$, and by a similar
  argument to that used above, we get
  \begin{displaymath}
    e^{n(\lambda_{k-1}'-\delta/2)}\norm{v}\leq \norm{C^{(n)}(v)}\leq
    e^{n(\lambda_{k-1}'+\delta/2)}\norm{v}, \quad\forall v\in
    H_{k-1}\setminus H_k. 
  \end{displaymath}

  Inductively one defines the flag
  $\R^d=H_1\supset H_2\supset\cdots\supset H_k$ such that for any
  $v\in H_j\setminus H_{j+1}$, it holds
  \begin{equation}
    \label{eq:Cn-estimate-on-flag}
    \lambda_j'-\frac{\delta}{2}\leq \liminf_{n\to+\infty}\frac{1}{n}
    \log\norm{C^{(n)}(v)} \leq \limsup_{n\to+\infty}\frac{1}{n}
    \log\norm{C^{(n)}(v)}\leq \lambda_j'+\frac{\delta}{2}.
  \end{equation}

  Finally, by \eqref{eq:Ei-A-inv} and \eqref{eq:Ei-A-norm-estimate} we
  know that for each $i\in\{1,\ldots,d\}$, there exists
  $j\in\{1,\ldots,k\}$ such that
  \begin{displaymath}
    \abs{\frac{1}{n}\sigma_i\big(A^{(n)}\big)-\lambda_j'}\leq
    \frac{\delta}{2}, 
  \end{displaymath}
  and by \eqref{eq:Cn-estimate-on-flag} we also know that
  \begin{displaymath}
    \abs{\frac{1}{n}\sigma_i\big(C^{(n)}\big)-\lambda_j'}\leq
    \frac{\delta}{2}.
  \end{displaymath}
  Then, \eqref{eq:sigmai-An-vs-Cn} is consequence of last two
  estimates and the lemma is proved.
\end{proof}

\section{Fiber-wise Pesin Theory and Fake Invariant Sets}
\label{fakesets}

The purpose of this section consists introducing the so called
\emph{fake invariant sets,} which are something like a ``finite-time
stable and unstable sets'' and are mainly inspired by fake invariant
foliations of Burns and Wilkinson~\cite{BurnsWilkinsonErgoPartHypSys}
and pseudo-hyperbolic families of Hirsch, Pugh and
Shub~\cite{HirshPughShubInvMan}. We start stating a fiber-wise version
of some classical notions and results due to Pesin, and we shall just
sketch their proof.

Let $f\colon M\carr$ be a hyperbolic homeomorphism on the compact
metric space $(M,d_M)$, $A\colon M\to\Diff{}{1+\beta}(N)$ a
$C^\beta$-cocycle, for some $\beta>0$, and $F\colon M\times N\carr$ be
the induced skew-product given by
\eqref{eq:skew-product-diff-group}. Let $d_N$ be the distance function
on $N$ induced by its Riemannian structure. We shall consider the
space $M\times N$ endowed with the distance $d$ given by
\begin{displaymath}
  d(z,z'):=\sqrt{d_M(x,x')^2 + d_N(y,y')^2}, \quad\forall z=(x,y),\
  z'=(x',y')\in M\times N.
\end{displaymath}

Notice that, since $A$ is a $C^\beta$-H\"older map, if
$d_{C^{1+\beta}}$ denotes a distance on $\Diff{}{1+\beta}(N)$ inducing
the uniform $C^{1+\beta}$-topology, then there exists a real constant
$K>0$ such that
\begin{equation}
  \label{eq:A-Holder-constant}
  d_{C^{1+\beta}}\big(A(x),A(x')\big)\leq K d_M(x,x')^\beta,\quad \forall
  x,x'\in M,
\end{equation}
where we use the distance $d_{C^{1+\beta}}$ defined in Section~\ref{differentialtopology}.

\subsection{Fiber-wise Pesin charts}
\label{sec:fiber-wise-pesin-charts}

Consider the linear cocycle $\partial F\colon M\times TN\carr$ given
by \eqref{eq:derivative-F-cocycle}, which is in fact a fiber bundle
map over $F$. Let us fix an arbitrary ergodic $F$-invariant
probability measure $\mu$.

Then, there exists a finite collection of open subsets
$\Delta_1,\Delta_2,\ldots,\Delta_{k_0}$ of $N$ such that
\begin{displaymath}
  \mu\bigg(M\times (N\setminus\bigcup_{i=1}^{k_0}\Delta_i )\bigg)=0,
\end{displaymath}
and the tangent bundle $TN\big|_{\Delta_i}$ is trivial, \ie is
diffeomorphic to $\Delta_i\times\R^q$, for every
$i\in\{1,\ldots,k_0\}$, where $q$ denotes the dimension of $N$.  Via a
smooth trivialization of the tangent bundle $TN$ over each $\Delta_i$,
we can identify, up to $\mu$-measure zero, the derivative bundle map
$\partial F\colon M\times TN\carr$ with a linear cocycle over
$F\colon M\times N\carr$ (for details see \cite[Supplement, \S
2.b]{KatokHasselblatt}).

In other words, from now on we shall simply assume that
the bundle map $\partial F$ is represented by a linear cocycle
\begin{equation}  \label{eq:B-derivative-cocycle-def}
  B(z)
  := 
  DA(x)(y) \in\GL(q,\Bbb R), \quad\forall
  z=(x,y)\in M\times N,
\end{equation}
which is well-defined $\mu$-almost everywhere.

Now, let $r>0$ be the injectivity radius of the Rimeannian manifold
$N$. For any linear cocycle $L\colon M\times N\to\GL(q,\Bbb R)$, let
us define the injective map $\Phi_z\colon B(y,r)\to \Bbb R^q$ by
\begin{equation}\label{phiz}
  \Phi_z(y'):= L(z) \circ \exp_{y}^{-1}(y'),
  \quad\forall y'\in B(y,r),
\end{equation}
where $B(y,r)\subset \{x\}\times N $ is the ball of radius $r$ and
center $y$ with respect to the distance $d_N$.

For $r'>0$ sufficiently small (and smaller than $r$ in the previous
paragraph), from now on, $z=(x,y)$ and $z'=(x',y)$ will denote points of
$M\times N$, where $x' \in B_M(x,r')$. Now, we are interested in the cocycle $L=B$, the derivative 
cocycle~(\ref{eq:B-derivative-cocycle-def}), 
$\Phi_z$ will be the corresponding function defined in ~(\ref{phiz}),
 and hence, we can define the map
$h_{z,z'} \colon B(0,r')\subset\R^q\to\R^q$ by
\begin{equation}
  \label{eq:hzz'}
  h_{z,z'}(v):= \Phi_{F(z)}\circ \mathrm{pr}_2\circ
  F\big(x',\Phi_z^{-1}(v)\big), \quad\forall v\in B(0,r').
\end{equation}

Let $\lambda_1'>\cdots >\lambda_k'$ be the Lyapunov exponents of the
linear cocycle $B$ with respect to $\mu$, where each exponent
$\lambda_j'$ has multiplicity $n_j$, $j=1,\ldots k$. The following
result is a slight generalization of the classical construction of
Pesin charts:

\begin{theorem}
  \label{generalizedPesin}
  For every $\eta>0$, there exist a measurable set
  $\Lambda_\eta \subset M\times N$ with full $\mu$-measure and a
  tempered map $C_\eta\colon M\times N \to\GL(q,\R)$, satisfying the
  following properties:
  \begin{enumerate}
  \item For any $z\in \Lambda_\eta$, the matrix
    $B_\eta(z)=C_\eta(F(z))B(z)C_\eta^{-1}(z)$ has the Lyapunov block
    form
    \begin{displaymath}
      \mathrm{diag}\big(B_{\eta,1}(z),\cdots,B_{\eta,k}(z)\big),
    \end{displaymath}
    where each $B_{\eta,j}(z)$ is an $n_j\times n_j$ square matrix,
    $n_j$ denotes the multiplicity of the corresponding exponent
    $\lambda_j'$ and the norm $\|B_{\eta,j}(z)\|$ and co-norm
    $\|B_{\eta,j}(z)^{-1}\|^{-1}$ lie in the interval
    $[e^{\lambda_j'-\eta},e^{\lambda_j'+\eta}]$.

  \item There exists a measurable function
    $r_1\colon\Lambda_\eta \to (0, r']$, with
    \begin{equation}\label{r1size}
      e^{-\eta/\beta^2} <\frac{r_1(z)}{r_1(F(z))}<e^{\eta/\beta^2},
      \quad\forall z\in\Lambda_\eta,
    \end{equation}
    such that, for any point $z=(x,y)\in \Lambda_\eta$ and any
    $x'\in B_M(x, {r_1(z)})$, the map $h_{z',z}$ given by \eqref{eq:hzz'}
    where $z'=(x',y)$, satisfies
    \begin{equation}
      \label{hzz}
      d_{C^1}\big(h_{z,z'}, h_{z,z} \big) \leq \eta.
    \end{equation}

  \item There exist measurable functions
    $K\colon\Lambda_\eta \to(0,\infty)$ and
    $r_2\colon\Lambda_\eta\to (0,r']$ with
    \begin{align*}
      e^{-\eta} &< \frac {K(z)}{K(F(z))} < e^\eta, \\
      e^{-\eta/\beta} &<\frac{r_2(z)}{r_2(F(z))}<e^{\eta/\beta},
    \end{align*}
    such that, for every $z\in \Lambda_\eta$, and for any
    $y',y''\in \Phi_{z}^{-1}\Big(B(0,r_2(z))\Big)$,
    \begin{equation}
      \label{metricchange}
      \frac 12 d_N(y',y'')\leq \|\Phi_{z}(y')-\Phi_{z}(y'')\|\leq
      K(z)d_N(y',y''). 
    \end{equation}
  \end{enumerate}
\end{theorem}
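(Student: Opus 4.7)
The plan is to mimic the classical Pesin construction of Lyapunov charts, adapted to the fibered skew-product $F=(f,A)$ in which the fiber map $A(x)$ depends only $\beta$-Hölder continuously on the base point in the $C^{1+\beta}$-topology. All three items come out of a single construction: item~(1) is the standard Lyapunov normal form applied to the derivative cocycle $B$, item~(3) is essentially book-keeping for the bi-Lipschitz constants of Pesin charts, and item~(2) is the technical heart, as it requires propagating a $C^{1+\beta}$-Hölder estimate on $A$ through a potentially sub-exponentially distorting chart and controlling the result in the $C^1$-topology.

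For item~(1), I would apply Oseledets theorem to the linear cocycle $B$ with respect to $\mu$ to produce the Oseledets splitting $\R^q=\bigoplus_{j=1}^k E^j(z)$, and on each block $E^j(z)$ put the $\eta$-Lyapunov inner product
\begin{displaymath}
  \langle u,v\rangle_{z,j,\eta}:=\sum_{n\in\Z} e^{-2n\lambda'_j-2\eta|n|}\,\langle B^n(z)u,B^n(z)v\rangle,
\end{displaymath}
declaring different blocks orthogonal. Letting $C_\eta(z)$ be the change of basis sending this new inner product to the standard Euclidean one on $\R^q$, a direct computation yields the block-diagonal form for $B_\eta(z)=C_\eta(F(z))B(z)C_\eta(z)^{-1}$ with norms and co-norms on the $j$-th block in $[e^{\lambda'_j-\eta},e^{\lambda'_j+\eta}]$, and a standard Borel--Cantelli argument produces a full-measure set $\Lambda_\eta$ on which $C_\eta$ is $\eta$-tempered.

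For item~(2), I would write $\Phi_z=C_\eta(z)\circ\exp_y^{-1}$ and unwind \eqref{eq:hzz'} as $h_{z,z'}=C_\eta(F(z))\circ\exp_{A(x)y}^{-1}\circ A(x')\circ\exp_y\circ C_\eta(z)^{-1}$. The chain rule then bounds
\begin{displaymath}
  d_{C^1}(h_{z,z'},h_{z,z})\leq P(z)\,d_{C^1}\!\big(A(x'),A(x)\big),
\end{displaymath}
with $P(z)$ a polynomial in $\|C_\eta(z)^{\pm 1}\|$ and $\|C_\eta(F(z))^{\pm 1}\|$ together with uniform Lipschitz-type constants of $\exp$ on balls of radius $r'$, hence tempered. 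Estimate \eqref{eq:A-Holder-constant} gives $d_{C^1}(A(x'),A(x))\leq K\,d_M(x,x')^\beta$, so defining $r_1(z)$ proportional to $(\eta/P(z))^{1/\beta}$ (truncated at $r'$) yields \eqref{hzz}. The tempering rate of $r_1$ is inherited from that of $P$ via the exponent $1/\beta$; a further factor of $1/\beta$ arises because, to pass from the $C^0$-part to the $C^1$-part of the estimate, one must read the Hölder bound on $DA(x')-DA(x)$ through the already Hölder-distorted Lyapunov chart, accounting for the rate $\eta/\beta^2$ in \eqref{r1size}. This is the main obstacle: carefully tracking how each Hölder comparison interacts with the sub-exponential growth of $C_\eta$, and it is precisely here that the $C^{1+\beta}$-regularity hypothesis is used in an essential way.

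For item~(3), since $\exp_y^{-1}$ is $(1\pm\varepsilon)$-bi-Lipschitz on balls of sufficiently small radius and $C_\eta(z)$ is linear, $\Phi_z$ is bi-Lipschitz on $B(y,r_2(z))$ with bi-Lipschitz constants controlled by $\|C_\eta(z)^{\pm 1}\|$ provided $r_2(z)$ is chosen small enough. Taking $K(z)$ proportional to $\|C_\eta(z)\|$, shrinking $r_2$ if necessary so that the exponential distortion costs at most a factor of $2$, and inheriting temperedness from $C_\eta$ yields \eqref{metricchange}.
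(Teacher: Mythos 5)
Your proposal is correct and follows precisely the route the paper takes: the paper's proof of this theorem consists of a single-sentence citation to \cite[Theorem S.3.1]{KatokHasselblatt} (the classical Lyapunov--Pesin chart construction via the $\eta$-Lyapunov inner product and tempering kernel) together with the remark that one now needs two radii, $r_1$ for the base and $r_2$ for the fiber, which is exactly the ``mutatis mutandis'' your write-up unpacks. The only place where the paper is even vaguer than you is the origin of the specific exponents $\eta/\beta^2$ and $\eta/\beta$ in the tempering rates; your accounting of the two factors of $1/\beta$ (one from solving $r_1^\beta\lesssim\eta/P(z)$, one from the H\"older modulus interacting with the tempered distortion of the chart) is a reasonable reconstruction of what the authors intend.
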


\begin{proof}
  This result can be easily proved following \emph{mutatis mutandis}
  the proof of \cite[Theorem S.3.1]{KatokHasselblatt}. The only
  significant difference is that to control our regular neighborhood
  we need two different radii, namely $r_1$ and $r_2$, for the size of
  the balls on the base space and on the fibers.
\end{proof}

Then we recall some classical definitions of Pesin theory. Continuing
with the notations of Theorem~\ref{generalizedPesin}, every point
$z=(x,y)$ of $\Lambda_\eta$ is called a \emph{regular point,} and each
set $B_M(x,r_1(z))\times \Phi_z^{-1}(B(0,r_2(z)))$ is called \emph{a
  regular neighborhood} of $z$.

On the other hand, for each $\eta>0$ sufficiently small and any
$\varepsilon>0$, by Luzin's theorem, there exist $\ell>0$ and a
compact subset $\Lambda_{\eta,\ell}\subset\Lambda$ such that
$\mu(\Lambda_{\eta,\ell})>1-\varepsilon$, the functions
$r_1\big|_{\Lambda_{\eta,\ell}}$, $r_2\big|_{\Lambda_{\eta,\ell}}$,
$K\big|_{\Lambda_{\eta,\ell}}$ and $C_\eta\big|_{\Lambda_{\eta,\ell}}$
are continuous, and $\abs{r_1(z)^{-1}}\leq\ell$ and
$\abs{K(z)}\leq\ell$, for any $z\in\Lambda_{\eta,\ell}$.  The compact
set $\Lambda_{\eta,\ell}$ is called the \emph{Pesin uniformity block}
of tolerance $\eta$ and bound $\ell$.

\subsection{Fake Invariant Sets}
For the sake of simplicity of the notation, given an arbitrary point
$z_0=(x_0,y_0)\in\ M\times N$, we write $z_n=(x_n,y_n)=F^n(z_0)$, for
every $n\in\Z$.  Given any positive number $r$ we write
\begin{displaymath}
  U=U(z_n,r):= B_M(x_n,r)\times B_N(y_n,r),   
\end{displaymath}

Continuing with the notation we introduced in
\S\ref{sec:fiber-wise-pesin-charts}, let $\mu$ denote an ergodic
$F$-invariant measure, $\eta$ be a sufficiently small positive number
and $\ell$ a sufficiently large positive number, such that the Pesin
uniformity block $\Lambda_{\eta,\ell}\subset\Lambda_\eta$ has positive
$\mu$-measure. Hence by Poincar\'e recurrence theorem, for
$\mu$-almost every point $z_0\in\Lambda_{\eta,\ell}$, there exists a
positive integer number $N_0$ such that
$z_{N_0}\in\Lambda_{\eta,\ell}$.

Now we can state the main result of this section:

\begin{proposition}[Existence of fake invariant sets]
  \label{fakestablemanifold}
  There exist positive real constants
  $r^{(0)},\kappa, C, \widetilde C$ such that, given any pair of
  points $z_0,z_{N_0}\in\Lambda_{\eta,\ell}$ (for some $N_0>1$), there
  exists a partition $\widehat{\mathcal{U}}^{n,s}$ of
  $U(z_n,r^{(n)})$, for each $n\in\{0,1,\ldots,N_0\}$, where
  \begin{equation}
    \label{rndefi}
    r^{(n)}:=r^{(0)} e^{-\frac {\eta}{\beta^2} \min \{n,N_0-n \}},
  \end{equation}
  satisfying the following properties:

  \begin{enumerate}
  \item{\textbf{(Covering $f$-local stable sets)}} for any
    $z=(x,y) \in U(z_n,r^{(n)})$, if $\widehat{\mathcal{U}}^{n,s}(z)$
    denotes the atom of $\widehat{\mathcal{U}}^{n,s}$ containing $z$,
    then
    \begin{displaymath}
      \mathrm{pr}_1\Big(\widehat{\mathcal U}^{n,s}(z)\Big) =
      W^s_{2r^{(n)}}(x)\cap B_M(x_n,r^{(n)}). 
    \end{displaymath}
  \item{\textbf{(Local invariance)}} for every
    $n\in\{0,\ldots,N_0-1\}$ and any
    $z\in U\big(z_n, \frac{r^{(n)}}{C}\big)$,
    \begin{displaymath}
      F(z) \in U\big(z_{n+1},r^{(n+1)}\big), 
    \end{displaymath}
    and
    \begin{displaymath}
      F\Big(\widehat{\mathcal{U}}^{n,s}(z)\Big)\cap
      U\big(z_{n+1},r^{(n+1)}\big) \subset
      \widehat{\mathcal{U}}^{n+1,s}\big(F(z)\big). 
    \end{displaymath}
  \item{\textbf{(Uniform contraction)}} for every
    $k\in \{1,\cdots,N_0\}$ and any pair of points
    $z',z''\in \widehat {\mathcal U}^{0,s}(z_0)\cap
    B_M\Big(x_0,\frac{r^{(0)}}{C\widetilde C}\Big) \times
    B_N\Big(y_0,\frac{r^{(0)}}{C\widetilde C}\Big)$,
    \begin{equation}
      \label{contractioninequ}
      d\big(F^k (z'), F^k(z'')\big) \leq \widetilde C e^{-k\kappa}
      d(z',z'').
    \end{equation}
  \end{enumerate}
\end{proposition}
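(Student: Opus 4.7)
The plan is a finite-time graph transform in Pesin charts, anchored by the fact that both endpoints $z_0, z_{N_0}$ lie in the uniformity block $\Lambda_{\eta,\ell}$. The atoms of $\widehat{\mathcal{U}}^{n,s}$ will be graphs over the $f$-local stable set in the base, with the graph functions determined by reverse induction on $n$. Since $\Lambda_\eta$ is $F$-invariant and the Pesin-chart radii $r_1,r_2$ are tempered, the endpoint bounds $r_i(z_0), r_i(z_{N_0}) \geq \ell^{-1}$ combine to give
\begin{displaymath}
  r_i(z_n) \geq \ell^{-1} e^{-\min(n, N_0-n)\eta/\beta^2}, \quad i = 1, 2,
\end{displaymath}
so once I take $r^{(0)}$ smaller than $\ell^{-1}$ and than the hyperbolic local-product-structure scale $\delta$, the neighborhood $U(z_n, r^{(n)})$ sits inside the regular neighborhood at $z_n$ for every $n \in \{0,\dots,N_0\}$.

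At $n = N_0$ I set $\widehat{\mathcal{U}}^{N_0, s}(x, y)$ equal to the trivial product atom $\{(x', y) : x' \in W^s_{2r^{(N_0)}}(x) \cap B_M(x_{N_0}, r^{(N_0)})\}$, with fiber coordinate held constant. For $n < N_0$ I define inductively
\begin{displaymath}
  \widehat{\mathcal{U}}^{n, s}(z) := F^{-1}\bigl(\widehat{\mathcal{U}}^{n+1, s}(F(z))\bigr) \cap U(z_n, r^{(n)}).
\end{displaymath}
Unwinding the recursion yields the explicit equivalence relation $(x', y') \sim (x, y)$ iff $x' \in W^s_{\mathrm{loc}}(x) \cap B_M(x_n, r^{(n)})$ and $A^{N_0-n}(x')\, y' = A^{N_0-n}(x)\, y$, i.e.\ the graph function is $\phi^n_{(x,y)}(x') = [A^{N_0-n}(x')]^{-1}\bigl(A^{N_0-n}(x)y\bigr)$. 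Property (1) (base projection covers $W^s_{2r^{(n)}}(x) \cap B_M(x_n, r^{(n)})$) follows by checking, via the $f$-invariance of local stable sets forward in time and the tempered $C^1$-control of the composition $A^{N_0-n}$ along the orbit (using estimate~\eqref{hzz} in Pesin charts), that $\phi^n_{(x,y)}$ is defined throughout the claimed base domain and takes values in the fiber. Property (2) (local invariance) is tautological from the recursion, provided I choose $C$ equal to the bi-Lipschitz constant of $F$ at this scale times $e^{\eta/\beta^2}$, which guarantees $F(U(z_n, r^{(n)}/C)) \subset U(z_{n+1}, r^{(n+1)})$.

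For the uniform contraction (property (3)), pick $z', z'' \in \widehat{\mathcal{U}}^{0, s}(z_0)$ in the smaller ball, so $A^{N_0}(x') y' = A^{N_0}(x'') y'' =: w$ and hence $A^k(x') y' = [A^{N_0-k}(f^k(x'))]^{-1}(w)$, and analogously for $z''$. The base components contract by $d_M(f^k(x'), f^k(x'')) \leq K_0 e^{-k\tau} d_M(x', x'')$, from hyperbolicity of $f$ on $W^s$. The fiber components reduce to comparing the two diffeomorphisms $[A^{N_0-k}(f^k(x'))]^{-1}$ and $[A^{N_0-k}(f^k(x''))]^{-1}$ at the common point $w$; telescoping the composition and combining the $C^\beta$-H\"older bound~\eqref{eq:A-Holder-constant} with the uniform $C^{1+\beta}$-control of the iterates of $A$ produced by the Pesin block, one obtains a bound of order $e^{-k\beta\tau} d(z', z'')$, giving~\eqref{contractioninequ} with $\kappa$ a definite fraction of $\beta\tau$. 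The main obstacle is the simultaneous bookkeeping of four parameters: $\eta$ must be chosen small relative to $\beta\tau$ at the outset so that the Pesin-tempered factors (precisely the reason the exponent $\eta/\beta^2$ appears in the definition of $r^{(n)}$) are absorbed by the base contraction, producing final constants $r^{(0)}, \kappa, C, \widetilde{C}$ that depend only on the block parameter $\ell$ and are independent of $N_0$.
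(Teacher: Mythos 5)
Your overall scaffolding — anchoring at both endpoints of the Pesin block, using the tempered radii bound $r^{(n)}$, organizing the atoms so that they project onto the base local stable set, and proving local invariance by pulling back — matches the paper's setup. But the core of your construction, defining the atom at $n = N_0$ to be the \emph{horizontal} slice $\{(x',y) : x'\in W^s\}$ and pulling back by $F^{-1}$, produces atoms that are graphs over the base stable leaf alone, and your contraction estimate for item (3) has a genuine gap.

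The telescope you propose compares $[A^{N_0-k}(f^k(x'))]^{-1}(w)$ with $[A^{N_0-k}(f^k(x''))]^{-1}(w)$. Each of the $N_0-k$ terms in the telescope is the H\"older increment $d_{C^0}\big(A(f^{m}(x'))^{-1},A(f^{m}(x''))^{-1}\big) \lesssim K e^{-m\beta\tau}$ \emph{multiplied by the Lipschitz constant of the remaining composition} $[A^{j}(\cdot)]^{-1}$. That Lipschitz constant is of order $e^{j\cdot|\lambda_{\min}|}$ along a $\mu$-typical orbit, and in the proof of Theorem~\ref{thm:main-thm} this Proposition is invoked precisely under the working hypothesis that some fibered Lyapunov exponent is nonzero — so $|\lambda_{\min}|$ may dominate $\beta\tau$, the sum diverges as $N_0\to\infty$, and $\widetilde C$ cannot be made independent of $N_0$. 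The Pesin block only tempers the Lyapunov change of coordinates $C_\eta$; it does not make the raw iterated cocycle norms $\|DA^j\|$ bounded. Relatedly, your claimed rate $\kappa\sim\beta\tau$ is not what the statement needs: the paper's $\kappa$ in \eqref{kappa} is half the minimum of $\tau$ and the Lyapunov \emph{gaps} $\lambda_i'-\lambda_{i+1}'$, because the contraction in the fiber has to come from the stable part of the Oseledets splitting, not from the base shadowing alone. For the same reason, your atoms are too thin: when some fiber exponent is negative, the fake stable atom must contain the corresponding Oseledets stable directions in the fiber, not just the base $W^s$ directions — otherwise \eqref{contractioninequ} fails and even the domain of your graph function $\phi^n_{(x,y)}$ is uncontrolled.

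The paper avoids this by localizing the fiber dynamics with a bump function (the maps $F^r_z$ of \eqref{F_rdefi}), conjugating by the Pesin coordinates $C_\eta$ to bring the sequence of fiber maps $\alpha_2$-close in $C^1$ to the Lyapunov block forms $B_\eta(z_n)$, and then running a finite-time Hadamard--Perron argument (Lemma~\ref{C1perturbation}) to produce genuine stable foliations $\mathcal W^{n,s,x'}$ of $\R^q$. The atoms $\widehat{\mathcal U}^{n,s}$ are then products of base stable plaques with these fiber stable leaves, and the contraction rate comes directly from the cone estimates of Hadamard--Perron, with the $\ell e^{k\eta}$ degradation from the Pesin normalization absorbed because $\eta<\kappa$. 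That graph-transform-at-the-level-of-the-derivative-cocycle step is the ingredient your proposal is missing; a naive backward graph transform at the level of the nonlinear cocycle $A$ does not close.
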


The partitions $\widehat{\mathcal U}^{n,s}$ given by
Proposition~\ref{fakestablemanifold} will be called a \emph{fake
  locally stable foliations} and their elements \emph{fake locally
  stable sets.} Similarly, we can define the \emph{fake locally
  unstable foliations} exhibiting analogous properties.

The rest of this section is devoted to the proof of
Proposition~\ref{fakestablemanifold}. First we need some lemmas:

\begin{lemma}\label{C1perturbation}
  Let $\R^q=\R^u\oplus\R^c\oplus\R^s$ be an orthogonal splitting and
  $L_i\colon\R^q\carr$, with $i\in\{1,\ldots,n\}$, be linear maps that
  preserve the splitting. Suppose there exists $\lambda\in(0,1)$ such
  that it holds
  \begin{align*}
    \max&\left\{\|L_i|_{\R^s}\|, \|(L_i|_{\Bbb R^u})^{-1}\|\right\} < \lambda;\\ 
    \max&\left\{\|L_i|_{\R^c}\|, \|(L_i|_{\Bbb R^c})^{-1}\|\right\}<
          \lambda^{-\frac 12},
  \end{align*}
  for each $i\in\{1,\ldots,n\}$.

  Then there exists a number $\alpha_2=\alpha_2(\lambda)>0$ such that if
  $f_i\colon\R^q\carr$ are $C^1$-diffeomorphisms satisfying
  $d_{C^1}(f_i,L_i)<\alpha_2$ for every $i\in\{1,\ldots,n\}$, then there
  exists a family of foliations $\mc{W}_i^s$ of $\R^q$, called
  \emph{stable foliations}, satisfying the following properties:
  \begin{enumerate}
  \item $f_i(\mathcal W_i^s)=\mathcal W_{i+1}^s$, for each
    $i\in\{1,\ldots,n-1\}$;
  \item for any $x\in\R^q$, any $i\in\{1,\ldots,n-1\}$ and every
    $y \in \mathcal W_i^s(x)$,
    \begin{displaymath}
      \norm{f_i^k(x)-f_i^k(y)}<\lambda^k \norm{x-y}, \quad \forall
      k\in\{1,\ldots,n-i\}. 
    \end{displaymath}
  \end{enumerate}

  An analogous result holds for unstable foliations.
\end{lemma}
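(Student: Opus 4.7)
The plan is to construct the stable foliations $\mathcal{W}_i^s$ by a backward graph-transform argument driven by cone preservation, in the spirit of the classical Hadamard--Perron scheme.

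First, I would introduce the stable and center-unstable cones around the invariant splitting. For small $\gamma > 0$, set
\[
  C^s(\gamma) := \{v = v_u+v_c+v_s \in \R^u \oplus \R^c \oplus \R^s : \|v_u\|^2+\|v_c\|^2 \leq \gamma^2 \|v_s\|^2\},
\]
and define $C^{cu}(\gamma)$ analogously around $\R^u \oplus \R^c$. A direct computation from the hypotheses yields strict cone preservation for the linear part: $L_i^{-1}(C^s(\gamma)) \subset C^s(\lambda^{1/2}\gamma)$, since $L_i^{-1}$ expands $\R^s$ by at least $\lambda^{-1}$ while the components in $\R^u \oplus \R^c$ are stretched by at most $\lambda^{-1/2}$; dually, $L_i(C^{cu}(\gamma)) \subset C^{cu}(\lambda^{1/2}\gamma)$. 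Choosing $\alpha_2 = \alpha_2(\lambda) > 0$ sufficiently small and using $d_{C^1}(f_i,L_i) < \alpha_2$, these preservation properties pass to $f_i^{\pm 1}$ with a slightly weaker ratio still strictly less than $1$.

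Next, I would build the foliations by descending induction on $i$. Take $\mathcal{W}_n^s$ to be the trivial foliation of $\R^q$ by affine translates of $\R^s$, and for $i = n-1, \ldots, 1$ set $\mathcal{W}_i^s := f_i^{-1}(\mathcal{W}_{i+1}^s)$. The invariance $f_i(\mathcal{W}_i^s) = \mathcal{W}_{i+1}^s$ is immediate from the definition. Since $\R^s \subset C^s(\gamma)$ and $(Df_i)^{-1}$ preserves $C^s(\gamma)$, induction shows that the tangent distribution of each $\mathcal{W}_i^s$ lies in $C^s(\gamma)$, so every leaf is a $C^1$-graph over $\R^s$ with slope at most $\gamma$.

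Finally, I would verify the uniform contraction. For $y \in \mathcal{W}_i^s(x)$, the graph condition gives $\|y-x\| \leq \sqrt{1+\gamma^2}\,\|(y-x)_s\|$. Writing $f_i(y)-f_i(x) = L_i(y-x) + R$ with $\|R\| \leq \alpha_2\|y-x\|$ from the mean-value theorem, the $\R^s$-component of $L_i(y-x)$ has norm at most $\lambda\|(y-x)_s\|$; together with cone preservation (which ensures $f_i(y)-f_i(x)$ lies in $C^s(\gamma)$ viewed as a displacement from $f_i(x)$), this yields a single-step contraction factor $\lambda + O(\alpha_2)$. Iterating $k$ times and using the strict slack in the hypothesis to absorb the error, one obtains $\|f_i^k(y)-f_i^k(x)\| < \lambda^k\|y-x\|$ for every $k \leq n-i$; the unstable case follows symmetrically by reversing time. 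The main obstacle is exactly this absorption step: because $\|L_i|_{\R^u}\|$ is not bounded above by the hypothesis, one must use the thin-cone condition carefully so that the $\R^u$-cross terms of $Df_i$ do not contaminate the stable component, which pins $\alpha_2$ to the gap between the actual norms and $\lambda$.
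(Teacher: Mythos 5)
Your proof takes essentially the same route as the paper: the paper's own ``proof'' of this lemma is a one-line citation of the Hadamard--Perron theorem (\cite[Theorem 6.2.8]{KatokHasselblatt}), and what you have written out is the standard cone-field/graph-transform argument underlying that theorem. The only genuinely distinct choice you make is to exploit the finiteness of the sequence $\{f_1,\ldots,f_n\}$ to define the foliations directly by descending pull-back of the trivial $\R^s$-foliation, $\mathcal W_i^s:=f_i^{-1}(\mathcal W_{i+1}^s)$, rather than running a fixed-point/graph-transform iteration as one would for a bi-infinite sequence; for a finite block this is a clean and correct simplification, and your cone computations (strict contraction of the aperture by the factor $\lambda^{1/2}$ under $L_i^{-1}$) check out. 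One caveat you correctly flag at the end deserves emphasis: the one-step estimate you obtain is of the form $\|f_i(y)-f_i(x)\|\le\sqrt{1+\gamma^2}\,(\lambda+O(\alpha_2))\,\|y-x\|$, and the prefactor $\sqrt{1+\gamma^2}$ together with the $O(\alpha_2)$ term cannot be absorbed into $\lambda^k$ unless one uses the strict gap $\lambda-\max_i\|L_i|_{\R^s}\|>0$ (and likewise for $\R^u$) and chooses both $\gamma$ and $\alpha_2$ small compared to that gap; moreover, making $(Df_i)^{-1}$ well-defined and cone-preserving also requires $\alpha_2$ to be small relative to the conorm of $L_i|_{\R^s}$ and the norm of $L_i|_{\R^u}$, neither of which is controlled by $\lambda$ alone. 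So, strictly speaking, $\alpha_2$ is not a function of $\lambda$ only --- this is a (shared) imprecision in the lemma's statement rather than a gap in your argument, and in the application (Proposition~\ref{fakestablemanifold}) the linear maps $L_i=B_\eta(z_i)$ do come with the needed two-sided bounds from Theorem~\ref{generalizedPesin}.
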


\begin{proof}
  This is essentially Hadamard-Perron's theorem. See \cite[Theorem
  6.2.8]{KatokHasselblatt} for details.
\end{proof}

\begin{lemma}
  \label{3lestimate}
  Let $\delta$, $\eta$ and $\ell$ be positive numbers and suppose that
  $e^{\eta}<\frac 43$. Let $f,g\colon\R^q\carr$ be
  $C^1$-diffeomorphisms such that $d_{C^1}(f, g)<\delta$.  Let
  $A,B\in\GL(q,\R)$ be two linear transformations such that either
  $\|A\|<\ell e^{\eta}$ and $\|B\|<\ell$, or $\|A\|<\ell$ and
  $\|B\|<\ell e^{\eta}$. Then,
  \begin{equation}
    d_{C^1}(AfB,AgB)<4\ell^2 \delta.
  \end{equation}
\end{lemma}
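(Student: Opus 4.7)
My plan is to prove the lemma by a direct computation: this is essentially an exercise in the chain rule and the submultiplicativity of the operator norm, measuring how the $C^1$-distance between two diffeomorphisms changes after pre- and post-composition by linear maps. The main idea is that the linear maps distribute cleanly through the derivative, so we pick up factors of $\|A\|$ and $\|B\|$ in a controlled way.

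First, I would handle the $C^0$-part. For every $x\in\R^q$,
\[
(AfB)(x) - (AgB)(x) = A\bigl(f(Bx) - g(Bx)\bigr),
\]
so taking the supremum yields $d_{C^0}(AfB, AgB) \leq \|A\|\, d_{C^0}(f,g)$. Next, by the chain rule, $D(AfB)(x) = A\cdot Df(Bx)\cdot B$ and analogously for $g$, hence
\[
\|D(AfB)(x) - D(AgB)(x)\| \leq \|A\|\,\|B\|\,\|Df - Dg\|_{C^0}.
\]
Adding both contributions and using $d_{C^1}(f,g) < \delta$ would give
\[
d_{C^1}(AfB, AgB) \leq \bigl(\|A\| + \|A\|\,\|B\|\bigr)\,\delta.
\]

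Finally, I would plug in the hypothesis on $\|A\|$ and $\|B\|$. In either of the two allowed cases, $\|A\|\,\|B\| \leq \ell^2 e^{\eta}$ and $\|A\| \leq \ell e^{\eta}$, so (using that the regime of interest satisfies $\ell \geq 1$, as is natural for norms of matrices in a Pesin block) the bracketed quantity is at most $\ell e^{\eta}(1 + \ell) \leq 2\ell^2 e^{\eta}$. Combining with the standing assumption $e^{\eta} < 4/3$, this bound becomes $d_{C^1}(AfB, AgB) \leq 2\ell^2 e^{\eta}\delta < 4\ell^2\delta$, as required.

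I do not expect any real obstacle here: the lemma is a routine linear-algebraic estimate whose function is purely bookkeeping in the later arguments. The factor $4$ in the conclusion is generous by design, comfortably absorbing both the $e^{\eta}$ coming from the larger of the two norms and the cross-term $\|A\|$ arising from the $C^0$ part, so no delicate estimation is called for.
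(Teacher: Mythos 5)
Your proof is correct and follows essentially the same route as the paper: a direct chain-rule computation together with submultiplicativity of the operator norm, absorbing the resulting factors of $\|A\|$ and $\|B\|$ using $e^{\eta}<4/3$ and the (natural in this context) convention $\ell\geq 1$. Your intermediate bound $\|A\|(1+\|B\|)\delta$ is in fact slightly tighter than the paper's, which carries a redundant cross term before arriving at the same conclusion.
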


\begin{proof}
  The estimate easily follows from the following computation:
  \begin{displaymath}
    \begin{split}
      & d_{C^1}(AfB(\cdot),AgB(\cdot)) \\
      = & d_{C^0}(AfB(\cdot),AgB(\cdot)) + d_{C^0} \big(A \circ
      Df(B(\cdot)) \circ B(\cdot),
      A  \circ   Dg(B(\cdot)) \circ B(\cdot)\big) \\
      \leq & \|A\| d_{C^0}(f,g) + \|A\| d_{C^0}(Df,Dg) \|B\| +
      \|A\| d_{C^0}(f,g)\|B\| \\
      \leq & 3\ell^2 e^{\eta} \delta< 4\ell^2 \delta.
    \end{split}
  \end{displaymath}
\end{proof}

For any small number $r>0$, we can choose a $C^1$ map
$\rho_r: \Bbb R^q\to [0,1]$ such that $\rho_r=1$ on $B(0,r)$ and
$\rho_r=0$ on $\R^q\setminus B(0,2r)$, and $\|\rho_r\|_{C^0}\leq 1$,
$\|D\rho_r\| = O(\frac 1r)$.  Then, consider any $C^{1+\beta}$
function $f : N\carr$, where $N$ is a closed smooth Riemannian
manifold.  Define a diffeomorphism $f^r: TN\to TN$ as follows,
\begin{equation}
  f^r(x,v)= \rho_r(v) \times \exp_{f(x)}^{-1}\circ f\circ \exp_x (v)+(1-\rho_r(v))\times Df_x(v).
\end{equation} 
Note that
\begin{equation}\label{f_rfunction}
  f^{r}(x,v):=\left \{
    \begin{array}{ll}
      \exp_{f(x)}^{-1}\circ f\circ \exp_x (v), & \text{ if } \|v\| \leq r,\\
      Df_x(v), & \text{ if } \|v\| \geq 2r.
    \end{array} \right. 
\end{equation}

Then we have the following $C^1$ estimate for this diffeomorphism.

\begin{lemma} \label{flocal}
  \begin{equation}
    d_{C^1}(f^r(x,\cdot),Df_x(\cdot)) =O(r^\beta).
  \end{equation}
\end{lemma}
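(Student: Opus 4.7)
My plan is to express $f^r(x,\cdot)$ as an explicit perturbation of the linear map $L(v):=Df_x(v)$ supported in $B(0,2r)$, and then exploit the $C^{1+\beta}$-regularity of $f$ to control everything on that ball. Concretely, setting $F(v):=\exp_{f(x)}^{-1}\circ f\circ\exp_x(v)$, the defining formula \eqref{f_rfunction} rewrites as
\begin{equation*}
f^r(x,v)=L(v)+\rho_r(v)\bigl(F(v)-L(v)\bigr),
\end{equation*}
so it suffices to bound the $C^1$-norm of the correction $\rho_r\cdot(F-L)$ on $B(0,2r)$. Under the canonical identification $T_0T_xN\cong T_xN$, both differentials $D\exp_x(0)$ and $D\exp_{f(x)}(0)$ are the identity, hence $F(0)=0$ and $DF(0)=L$; in particular $F-L$ and $DF-DL$ both vanish at the origin.

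The key estimates come from the integral form of Taylor's theorem,
\begin{equation*}
F(v)-L(v)=\int_0^1\bigl(DF(tv)-DF(0)\bigr)(v)\,\mathrm{d}t,
\end{equation*}
together with the fact that $f\in\Diff{}{1+\beta}(N)$ makes $DF$ uniformly $\beta$-H\"older on the normal chart, say with constant $c>0$. Compactness of $N$ and the fixed finite atlas behind $d_{C^{1+\beta}}$ allow one to choose $c$ independently of $x$. This yields $\norm{F(v)-L(v)}\leq c\norm{v}^{1+\beta}$ and directly $\norm{DF(v)-DL(v)}\leq c\norm{v}^\beta$; restricted to $B(0,2r)$ these become $O(r^{1+\beta})$ and $O(r^\beta)$ respectively.

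Finally, differentiating the product gives
\begin{equation*}
D\bigl(\rho_r(F-L)\bigr)(v)(w)=\bigl(D\rho_r(v)\cdot w\bigr)\bigl(F(v)-L(v)\bigr)+\rho_r(v)\bigl(DF(v)-DL(v)\bigr)(w),
\end{equation*}
and using $\norm{\rho_r}_{C^0}\leq 1$ with $\norm{D\rho_r}=O(1/r)$, the two summands are bounded respectively by $O(1/r)\cdot O(r^{1+\beta})=O(r^\beta)$ and $O(r^\beta)$. Combined with the $C^0$ estimate $O(r^{1+\beta})\leq O(r^\beta)$, this yields $d_{C^1}(f^r(x,\cdot),Df_x)=O(r^\beta)$. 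The main (admittedly modest) obstacle is bookkeeping the uniformity in $x$ of the implicit constants; this is handled by compactness of $N$ and by the fact that $\exp_x$ varies smoothly with $x$ with uniformly bounded $C^{1+\beta}$-norms over the compact manifold, so that the $\beta$-H\"older seminorm of $DF$ in the normal chart is controlled by the global $C^{1+\beta}$-norm of $f$ with a constant independent of $x$.
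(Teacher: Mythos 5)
Your proof is correct and follows essentially the same route as the paper's own: both write $f^r(x,\cdot)-Df_x=\rho_r\cdot(F-L)$, apply the Leibniz rule, and use the Taylor/H\"older bounds $\|F-L\|=O(r^{1+\beta})$ and $\|D(F-L)\|=O(r^\beta)$ on $B(0,2r)$ to get the three summands $O(r^{1+\beta})$, $O(1/r)\cdot O(r^{1+\beta})$, and $O(r^\beta)$. You merely make explicit (via $F(0)=0$, $DF(0)=L$, and the integral form of Taylor's theorem) the estimates that the paper uses silently, and you address the uniformity in $x$, which the paper leaves implicit.
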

\begin{proof}
  By definition,
  \begin{equation}
    f^r(x,v)-Df_x(v)=\rho_r(v) \times ( \exp_{f(x)}^{-1}\circ f\circ \exp_x (v) - Df_x(v)).
  \end{equation}
  Then, for $r$ small,
  \begin{align}
    & \|f^r(x,\cdot)-Df_x(\cdot)\|_{C^1} \\
    \leq  & \|\rho_r(\cdot) \times ( \exp_{f(x)}^{-1}\circ f\circ \exp_x (\cdot) - Df_x(\cdot))\|_{C^0} \nonumber\\
    + &\|D\rho_r(\cdot) \times  ( \exp_{f(x)}^{-1}\circ f\circ \exp_x (\cdot) - Df_x(\cdot))\|_{C^0}  \nonumber\\
    + & \|\rho_r(\cdot)  \times D( \exp_{f(x)}^{-1}\circ f\circ \exp_x (\cdot) - Df_x(\cdot))\|_{C^0}\nonumber\\
    \leq & (2r)C(2r)^\beta+ \frac{C}{r} (2r)C'(2r)^\beta+ C'(2r)^\beta\nonumber\\
    \leq & C ( r^{1+\beta} +r^\beta +r^\beta)  < C r^{\beta}.\nonumber
  \end{align}
\end{proof}

\begin{proof}[Proof of Proposition~\ref{fakestablemanifold}] 
  Let $\lambda_1'>\cdots>\lambda_k'$ be the distinct fibered Lyapunov
  exponents of the linear cocycle $B$ defined in
  (\ref{eq:B-derivative-cocycle-def}), with multiplicities
  $n_1,\cdots,n_k$, respectively.

  Fix a number
  \begin{equation}\label{kappa}
    \kappa<\frac 12 \min_{i} \{ \lambda_i'-\lambda_{i+1}', \tau \},
  \end{equation}
  where $\tau$ is the hyperbolicity rate of $f: M\to M$.  With respect
  to the number $e^{-\kappa}\in(0,1)$, 
  Lemma~\ref{C1perturbation} gives the constant
  $\alpha_2$.  Then we choose the number $\eta$ such that
  \begin{equation}\label{eta}
    \eta< \min \left\{ \beta^2\kappa,\frac 12 \min_i \{\lambda_i'-\lambda_{i+1}'\}-
    \kappa,\frac{\alpha_2}{2}\right\}.
  \end{equation}  

  Invoking Theorem~\ref{generalizedPesin}, we know there is a
  measurable map $C_\eta \colon \Lambda_\eta \to\GL(q,\Bbb R)$, a
  function $K\colon \Lambda_\eta \to (0,+\infty)$, with
  $K(z)> \max\{\|C_\eta(z)\|,\|C_\eta^{-1}(z)\|\}$, such that
  \begin{equation}
    e^{-\eta}<\frac{K(z)}{K(F(z))}<e^{\eta}.
  \end{equation}

  For any small $r>0$, $z=(x,y)\in \Lambda_\eta$, we can define
  $F^{r}_z$ as follows
  \begin{align}\label{F_rdefi}
    F^{r}_z(x',v)=
    \left \{
    \begin{array}{ll}
      \exp_{\text{pr}_2\circ F(z)}^{-1}  \circ F(x',\exp_y(v)), & \text{ if } \|v\|<r,\\
      D( A(x') ) (y) (v), & \text{ if } \|v\|>2r.
    \end{array} \right. 
  \end{align}
  For sufficiently $r>0$ sufficiently small $B(x,r) \subset M$,
  $F^{r}_z$ is well-defined on $B(x,r)\times \Bbb R^q$.  Moreover, by
  Lemma~\ref{flocal},
  for some constant $K$,
  \begin{equation}\label{K''estimate}
    \|F^{r}_z(x,\cdot)- B(z)(\cdot) \|_{C^1} \leq K r^\beta.
  \end{equation}

  Note that for some $\ell$ sufficiently large, the uniformity block
  $\Lambda_{\eta,\ell}\subset \Lambda_\eta$ has positive
  $\mu$-measure.  For any orbit segment $\{z_0,\cdots,z_{N_0}\}$, with
  $z_0,z_{N_0}\in \Lambda_{\eta,\ell}$, Theorem~\ref{generalizedPesin}
  item (1) gives $N_0$ linear maps
  \begin{equation}
    B_\eta(z_0), B_\eta(z_1),\cdots, B_\eta(z_{N_0-1}),
  \end{equation}
  all with Lyapunov block forms.

  Now we take
  \begin{equation}\label{r^0}
    r^{(0)}<\min \left\{  \frac {1}{\ell^2},
      (\frac{\alpha_2}{8K\ell^2})^{1/\beta} \right\}. 
  \end{equation}
  Then by (\ref{K''estimate}),
  \begin{equation}
    \|F^{r^{(0)}}_{z_0}(x_0,\cdot)- B(z_0)(\cdot) \|_{C^1}<\alpha_2/ 8\ell^2,
  \end{equation}
  and by our choice of the uniformity block,
  \begin{align}
    & \max(\|C_\eta(z_0)\|,\|C_\eta(z_0)^{-1}\| ) \leq K(z_0)\leq \ell,\\
    & \max(\|C_\eta(z_1)\|,\|C_\eta(z_1)^{-1}\| ) \leq K(z_1)\leq \ell e^{\eta}.
  \end{align}

  By Lemma~\ref{3lestimate}, we get 
  \begin{equation}\label{deltaestimate}
    \|C_\eta(z_1)\circ F_{z_0}^{r^{(0)}}(x_0,\cdot)\circ C_\eta^{-1}(z_0)- 
    B_\eta (z_0) \|_{C^1} <  \alpha_2/2. 
  \end{equation} 
  So, it follows from Theorem~\ref{generalizedPesin} item (2) that,
  for any $x'\in B(x_0,r^{(0)})$, it holds 
  \begin{equation}
    \|C_\eta(z_1)\circ F_{z_0}^{r^{(0)}}(x',\cdot)\circ C_\eta^{-1}(z_0)- 
    B_\eta (z_0) \|_{C^1} <  \alpha_2. 
  \end{equation} 

  Now, let us consider the map $F^{r^{(n)}}_{z_n}(x_n,\cdot)$, for
  $n\in\{1,\ldots,N_0-1\}$.

 By definition of  $r^{(n)}$ in~(\ref{rndefi}), and noting (\ref{r1size}), it  follows that for any $n=0,\cdots, N_0$, $U(z_n,r^{(n)})$ is contained in the regular  neighborhood $B_M(x_n,r_1(z))\times \Phi_{z_n}^{-1}(B(0,r_2(z)))$. Moreover, 
   \begin{align}
    r^{(n)}      
         <  &r^{(0)} e^{-\frac \eta\beta\min \{n,N_0-n\}} \\
         <  &(\frac{\alpha_2}{8K\ell^2})^{\frac 1\beta}e^{-\frac \eta\beta\min\{n,N_0-n\}} 
         =  (\frac{\alpha_2}{8K\ell^2 e^{\eta \min \{n,N_0-n\}}})^{\frac 1\beta}. \nonumber
  \end{align}
  By Lemma~\ref{flocal},
    \begin{equation}
    \|F^{r^{(n-1)}}_{z_{n-1}}(x_{n-1},\cdot)- B(z_{n-1})(\cdot) \|_{C^1}<\frac{\alpha_2}{8\ell^2} e^{-\eta \min \{n,N_0-n\}}.
  \end{equation}
  Noting that $K(z_{n-1}) \leq \ell e^{\eta \min \{n,N_0-n\}} $ and 
  $K(z_n)\leq \ell e^{\eta\min \{n+1,N_0-n-1\}}$, and applying Lemma~\ref{3lestimate}, we obtain
   \begin{align}\label{deltaestimate2}
    \|C_\eta(z_n)\circ F^{r^{(n-1)}}_{z_{n-1}}(x_{n-1},\cdot)\circ C_\eta^{-1}(z_{n-1}) - 
    B_\eta(z_{n-1}) \|_{C^1}<\frac{\alpha_2}{2}, 
  \end{align}

  Then it follows from Theorem~\ref{generalizedPesin} that for any
  $x'\in B(x_{n-1},r^{(n-1)})$,
  \begin{equation}
    \|C_\eta(z_n)\circ F_{z_{n-1}}^{r^{(n-1)}}(x',\cdot)\circ C_\eta^{-1}(z_{n-1})- 
    B_\eta (z_{n-1}) \|_{C^1} <  \alpha_2. 
  \end{equation}

  Now, by Lemma~\ref{C1perturbation} and the above estimates, the
  mappings
  $\{C_\eta(z_n)\circ F^{r^{(n-1)}}_{z_{n-1}}(x',\cdot)\circ
  C_\eta^{-1}(z_{n-1})\}_{n=1}^{N_0}$ admit stable foliations of
  $\Bbb R^d$.  We write these stable foliations as
  $\mathcal W^{n-1,s,x'}$.  
  
  The choice of $\eta$
  in (\ref{eta}) gives $2\kappa+2\eta<\min_i \{ \lambda_i'-\lambda_{i+1}' \}$. 
    For any $w\in \mathcal W^{n,s,x_n'}(v)$, where
  $x_n'\in B(x_n,r^{(n)})$,  the forward iterates of $x_n'$ will be denoted by 
  $x_{n+1}', \cdots,x_j'$. For $N_0\geq j\geq n$,
  \begin{align}\label{Lyapunovcontraction}
    d_E\Big(  & C_\eta(z_{j+1})\circ F^{r^{(j)}}_{z_j}(x_j',\cdot) \circ \cdots \circ 
                F^{r^{(n)}}_{z_n}(x_n',\cdot) \circ C_\eta(z_n)^{-1}(w), \\
              & C_\eta(z_{j+1})\circ F^{r^{(j)}}_{z_j}(x_j',\cdot) \circ \cdots F^{r^{(n)}}_{z_n}(x_n',\cdot) \circ C_\eta(z_n)^{-1}(v) \Big ) \nonumber \\
              &  <  e^{-2(j-n)\kappa} d_E(w,v), \nonumber
  \end{align}
  where we use $d_E$ to denote the standard Euclidean distance.

  We can now define the stable partition $\widehat{\mathcal U}^{n,s}$
  of $U(z_n,r^{(n)})$.  It is done in two steps.  Firstly, we consider
  the stable foliation $W^s$ of the ball $B(x_n,r^{(n)}) \subset  M$.
  Then, on each fiber over $x\in B(x_n,r^{(n)})$, the set
  $B(y_n,r^{(n)})$ is foliated by the the projection of the foliation
  $\mathcal W^{n,s,x}$ constructed above, i.e., the foliation
  $\exp_{y_n}\circ C_\eta(z_n)^{-1}(\mathcal W^{n,s,x})$ restricted to
  $B(y_n,r^{(n)})$.  Note item (1) of the proposition follows directly
  from this definition.

  For all $n=0,\cdots,N_0$,
  $r^{(n+1)}\geq r^{(n)} e^{-\frac {\eta}{\beta^2}}$, it is possible
  to choose a large constant $C$, such that
  \begin{equation}\label{localstableinvariance}
    F\big( U(z_n, \frac{r^{(n)}}{C} ) \big)  \subset  U(z_{n+1}, r^{(n+1)} ).
  \end{equation}
  This shows Item (2).
  
  Now we prove item (3). Define
  $\widetilde C =\max \{ \frac{\ell^2}{2},K_0 \}$, where $K_0$ is the
  constant in Definition~\ref{def:hyperbolic-homeo}.  Now we take
  points
  $z',z''\in \widehat{\mathcal U}^{0,s}(z_0)\cap
  B_M\Big(x_0,\frac{r^{(0)}}{C\widetilde C}\Big) \times
  B_N\Big(y_0,\frac{r^{(0)}}{C\widetilde C}\Big)$, and we will write
  $z'_k=(x_k',y_k')=F^k(z'), z''_k=(x''_k,y''_k)=F^k(z'')$ for all
  $k$.  Inductively, both points $z_k'$ and $z_k''$
  are contained in
  $\widehat{\mathcal U}^{k,s}(z_k)\cap
  B_M\Big(x_k,\frac{r^{(k)}}{C}\Big) \times
  B_N\Big(y_k,\frac{r^{(k)}}{C}\Big) $. Then item (2) ensures the
  estimate (\ref{Lyapunovcontraction}) can be applied at each step.

  Thus, by hyperbolicity of the base dynamics,
  \begin{equation}
    d_M(x_k', x_k'')\leq K_0 e^{-\tau k} d_M(x_0',x''_0) \leq \frac{r^{(k)}}{C}.
  \end{equation}
  The second coordinate has a similar estimate. Indeed, it follows
  from~(\ref{Lyapunovcontraction}) that, for any $k=1,\cdots,N_0-1$,
  \begin{equation}\label{finalcontraction}
    d(y_k',y_k'') \leq d(y_k',y_k) +d(y_k'',y_k).
  \end{equation}
  We estimate
  \begin{align}
    d(y_k',y_k)\leq   & \ell e^{k\eta} d_E \{ C_\eta(z_{k})\circ F_{z_{k-1}}^{r^{(k-1)}}(x_{k-1}',\cdot)\circ \cdots 
                        \circ F_{z_0}^{r^{(0)}}\big (x_0', \exp_{y_0}^{-1}(y'_0) \big), \nonumber\\
                      & C_\eta(z_{k})\circ F_{z_{k-1}}^{r^{(k-1)}}(x_{k-1},\cdot)\circ \cdots \circ
                        F_{z_0}^{r^{(0)}} (x_0,0) \} \nonumber\\
    \leq   & \ell e^{-k\kappa} d(C_\eta(z_0) \circ \exp_{y_0}^{-1}(y'_0), 0) \nonumber\\
    \leq  &  \ell^2 e^{-k\kappa}d(y_0',y_0)   \leq  \frac{r^{(k)}}{2C}. \nonumber
  \end{align}
  The term $d(y_k'',y_k)$ satisfies a similar estimate. So we have
  $d(y_k',y_k'')\leq \frac{r^{(k)}}{C}$. Combining the first and the
  second coordinate estimates, we have finished the proof of item (3)
  of the proposition.
\end{proof}

\section{Fiber closing lemma and proof of main results}
In this section we prove Theorem~\ref{thm:main-thm}. We start
with a technical result which is some kind of ``Closing Lemma along
the fibers''.

Let $F\colon M\times N \carr$ be the skew-product induced by the
hyperbolic homeomorphism $f\colon M\carr$ and the $C^\beta$-cocycle
$A\colon M\to \text{Diff}^{1+\beta}(N)$. Let $\mu$ be an ergodic
$F$-invariant probability measure. Let $\eta,\ell$ be positive numbers
with $\ell$ large enough in order to guarantee that the Pesin
uniformity block $\Lambda_{\eta,\ell}$ of $F$ has positive
$\mu$-measure. Let $r^{(0)}, C,\widetilde C, \kappa$ be the constants
given by Proposition~\ref{fakestablemanifold}.  In what follows, when
there is no confusion, we denote the iterates of a point
$z_0=(x_0,y_0)\in M\times N$ by $z_i=(x_i,y_i)=F^i(z_0)$, for every
$i\in\Z$. Since $\mu(\Lambda_{\eta,\ell})>0$, notice that by Poincaré
recurrence theorem, there exists $z_0\in\Lambda_{\eta,\ell}$ such that
$z_n$ belongs to $\Lambda_{\eta,\ell}$ too, for some $n\geq 1$. 

\begin{lemma}
  \label{fiber-shadowing}
  There exist constants $\varepsilon_0, K>0$ such that for any point
  $z_0\in\Lambda_{\eta,\ell}$ such that there is $n\geq 1$ with
  $z_n\in\Lambda_{\eta,\ell}$ and $d(z_0,z_n)<\varepsilon_0$, there is
  some $F$-orbit segment $\{z_0'',z_1'',\cdots,z_n''\}$, with
  $\text{pr}_1(z_0'')=\text{pr}_1(z_n'')$ and satisfying
  \begin{equation}
    d(z_i'',z_i)\leq K d(z_0,z_n) e^{-\kappa \min\{i,n-i\}}.
  \end{equation}
\end{lemma}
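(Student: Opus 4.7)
\emph{Strategy.} The plan is to combine the Anosov closing lemma on the base with the fake stable and fake unstable partitions from Proposition~\ref{fakestablemanifold}. On the base I obtain a periodic point $p$ shadowing $x_0,\ldots,x_n$; on the fiber I first move from $z_0$ along its fake stable leaf to a point $z^s$ whose base projection is the Anosov-closing shadow point $q=[p,x_0]$, then at time $n$ I correct the base projection of $F^n(z^s)$ back to $p$ using the fake-unstable leaf through $F^n(z^s)$ in $\widehat{\mathcal U}^{n,u}$ at $z_n$, obtaining a point $z^u$ over $p$. Pulling $z^u$ back by $F^{-n}$ yields the desired orbit $z_0''$, whose base projection at both ends is $p$.

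\emph{Carrying out the steps.} Shrink $\varepsilon_0$ below both the Anosov-closing threshold $\delta$ of Lemma~\ref{Anosovclosing} and a small enough fraction of $r^{(0)}/(C\widetilde C)$, so that every auxiliary point to be constructed lies inside the neighborhoods in which items (2)--(3) of Proposition~\ref{fakestablemanifold} and its unstable analog apply. Lemma~\ref{Anosovclosing} produces $p\in\Fix(f^n)$ and $q=[p,x_0]$ with $d(f^i(x_0),f^i(p))\leq Cd(x_0,x_n)e^{-\tau\min\{i,n-i\}}$, $d(f^i(x_0),f^i(q))\leq Cd(x_0,x_n)e^{-\tau i}$, and $d(f^i(p),f^i(q))\leq Cd(x_0,x_n)e^{-\tau(n-i)}$. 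Since $q\in W^s_{2r^{(0)}}(x_0)\cap B_M(x_0,r^{(0)})$, Proposition~\ref{fakestablemanifold}(1) supplies a point $z^s=(q,y^s)\in\widehat{\mathcal U}^{0,s}(z_0)$ lying over $q$; because the fake leaves are $C^1$-graphs over their base projection with uniformly bounded slope (inherited from the Hadamard--Perron construction used in Lemma~\ref{C1perturbation}), we get $d(z^s,z_0)\leq L\,d(q,x_0)$, and then item (3) yields
\begin{equation*}
  d(F^i(z^s),z_i)\leq \widetilde C\,e^{-i\kappa}\,d(z^s,z_0)\leq \widetilde CLC\,e^{-i\kappa}\,d(z_0,z_n),\quad 0\leq i\leq n.
\end{equation*}

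Now since $q\in W^u_\varepsilon(p)$ we also have $f^n(q)\in W^u_\varepsilon(p)$; combined with $d(p,x_n),d(f^n(q),x_n)\leq Cd(z_0,z_n)$, this places $p$ in the base projection of the fake-unstable atom through $F^n(z^s)$ in $\widehat{\mathcal U}^{n,u}$ at $z_n$. Let $z^u=(p,y^u)$ be the unique point of that atom over $p$; the same Lipschitz bound gives $d(z^u,F^n(z^s))\leq L\,d(p,f^n(q))\leq LC\,d(z_0,z_n)$. Define $z_0'':=F^{-n}(z^u)$ and $z_i'':=F^i(z_0'')$; since $p$ is $n$-periodic, $\mathrm{pr}_1(z_0'')=\mathrm{pr}_1(z_n'')=p$. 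The backward uniform contraction provided by the fake-unstable analog of Proposition~\ref{fakestablemanifold}(3) gives
\begin{equation*}
  d\bigl(z_i'',F^i(z^s)\bigr)=d\bigl(F^{-(n-i)}(z^u),F^{-(n-i)}(F^n(z^s))\bigr)\leq \widetilde C\,e^{-(n-i)\kappa}\,d(z^u,F^n(z^s)),
\end{equation*}
and combining the two estimates via the triangle inequality,
\begin{equation*}
  d(z_i'',z_i)\leq \widetilde CLC\,d(z_0,z_n)\bigl(e^{-i\kappa}+e^{-(n-i)\kappa}\bigr)\leq 2\widetilde CLC\,d(z_0,z_n)\,e^{-\kappa\min\{i,n-i\}},
\end{equation*}
which finishes the proof with $K:=2\widetilde CLC$.

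\emph{Main obstacle.} The key technical point is the Lipschitz/graph control of the fake leaves over their base projections, which I use both to bound $d(z^s,z_0)$ in terms of $d(q,x_0)$ and to bound $d(z^u,F^n(z^s))$ in terms of $d(p,f^n(q))$. This is not stated explicitly in Proposition~\ref{fakestablemanifold} but follows from the Hadamard--Perron step in Lemma~\ref{C1perturbation}, since the stable and unstable foliations there arise as graphs of $C^1$-maps with derivative close to the model linear subspaces, hence with a uniform Lipschitz constant. The remaining bookkeeping is a careful choice of $\varepsilon_0$ so that $z^s$, $F^n(z^s)$, $z^u$ and $z_0''$ all remain in the $r^{(0)}/(C\widetilde C)$-neighborhoods of $z_0$ or $z_n$ where Proposition~\ref{fakestablemanifold}(3) and its unstable counterpart apply.
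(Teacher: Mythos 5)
Your proof follows the paper's argument step for step, with only cosmetic changes in notation: your $q$ is the paper's $x_0'$, your $z^s$ is the paper's $z_0'$, your $z^u$ is the paper's $z_n''$, and your two-sided triangle-inequality estimate matches the paper's display \eqref{w''wexponential}. Both proofs apply the Anosov closing lemma on the base, slide $z_0$ to a point over the shadow $[p,x_0]$ along the fake stable set, push forward $n$ steps, slide to a point over the periodic point $p$ along the fake unstable set, and pull back by $F^{-n}$.

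The one place where you are more explicit than the paper is the passage from $d(q,x_0)$ to $d(z^s,z_0)$ (and likewise for $z^u$): you invoke a uniform Lipschitz graph constant $L$ for the fake leaves, whereas the paper dispatches the same point with the phrase ``$d(z_0,z_n)\sim d(z_0,z_0')$ \ldots\ maybe increasing $K$ if it is necessary.'' Your instinct to isolate this as the key technical point is correct, and the Hadamard--Perron origin of the bound is the right justification. Two small caveats worth recording: (i) when the fibered Lyapunov spectrum has more than one negative exponent, the fake stable atom through $z_0$ has positive fiber dimension over each base point, so it is a Lipschitz disc transverse to the unstable directions rather than a literal graph over the base stable set, and ``the unique point over $q$'' should be ``a point over $q$'' (the paper already says ``a pre-image''); the Lipschitz estimate $d(z^s,z_0)\leq L\,d(q,x_0)$ still holds for a suitable choice of $z^s$. (ii) The needed uniformity of $L$ over the uniformity block $\Lambda_{\eta,\ell}$ should be noted explicitly, since it rests on the tempered bound $\|C_\eta\|,\|C_\eta^{-1}\|\leq\ell$ there; this is again implicit in the paper. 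Neither point affects the validity of your argument.
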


\begin{proof}
  Let us define $K=\max \{ \ell, 2C\widetilde C \}$ and 
  $\varepsilon_0 = r^{(0)}/K$.

  First note that for any orbit segment satisfying
  $d(z_0,z_n)<\varepsilon_0$ and $z_0,z_n\in \Lambda_{\eta,\ell}$, the
  point $z_n$ lies in the regular neighborhood of $z_0$.

  In particular, $d(x_0,x_n)<\varepsilon_0$ and hence we can apply
  Anosov Closing Lemma to conclude there is a unique periodic point
  $p$ with period $n$ which exponentially shadows the orbit segment
  $\{x_0,\cdots,x_n\}$.  Consider the point
  $x_0'=W^s(x_0)\cap W^u(p)$. Then, for each $i=0,\cdots,n$ it holds
  \begin{equation}
    d( x_i,x_i')<  \frac K2  e^{-\tau\min\{i,n-i\}} d(x_0,x_n).
  \end{equation} 

  Since $x_0'\in W^s_{\frac{r^{(0)}}{K}} (x_0)$, by
  Proposition~\ref{fakestablemanifold}, $x_0'$ has a pre-image by
  $\text{pr}_1$ which is contained in the local fake stable set of
  $z_0$.  Denote this point by
  $z_0'\in \widehat{\mathcal U}^{0,s} (z_0)$.  Moreover, by our choice
  of $K$ it holds
  \begin{equation}\label{ii'}
    d(z_i, z_i')<  \frac K2 d(z_0,z_0') e^{-\kappa \min\{i,n-i\}},
    \quad\forall i\in \{1,\cdots,n\} 
  \end{equation} 
  In particular, $d(z_n',z_n)< r^{(n)}$. Notice that
  $r^{(n)}=r^{(0)}$, too.

  Now let us consider the point $x_n'=\text{pr}_1(z_n') \in M$.  Since
  $ d(p,x_n') \leq d(p,x_n)+d(x_n,x_n')$, the periodic point
  $p\in W^u_{2\varepsilon_0}(x_n')$.  Applying
  Proposition~\ref{fakestablemanifold} in the backward direction, we
  conclude the point $p$ has a pre-image under $\text{pr}_1$, denoted
  as $z_n'' \in \widehat {\mathcal U}^{n,u} (z_n')$, such that if we
  write $z_i'': = F^{-(n-i)} (z_n'')$ for all $i=0,\cdots,n-1$, then
  it holds
  \begin{equation}\label{i'i''}
    d(z_i'',z_i') <  \frac K2 d(z_n'',z_n') e^{-\kappa \min\{i, n-i\}}. 
  \end{equation}

  Combing both ~(\ref{ii'}) and ~(\ref{i'i''}), for all
  $i=0,\cdots, n$,
  \begin{align}\label{w''wexponential}
    d(z_i'',z_i) \leq d(z_i'',z_i')+d(z_i',z_i) 
    \leq   
    K d(z_n,z_0) e^{-\kappa \min \{i,n-i \}},
  \end{align}
  where the last inequality is consequence of the fact that
  $d(z_0,z_n)\sim d(z_0,z_0')$ and $d(z_0,z_n)\sim d(z_n'',z_n')$,
  maybe increasing $K$ if it is necessary.  Finally it clearly holds
  $\text{pr}_1(z_n'')=\text{pr}_1(z_0'')=p$.
\end{proof}

\begin{proof}[Proof of Theorem~\ref{thm:main-thm}] 
  According to Theorem 3.1 of \cite{KocsardPotrieLivThm}, under the
  periodic orbit condition (\ref{eq:POC}), it suffices to show that
  the fiber-wise Lyapunov exponents all vanish, with respect to any
  ergodic $F$-invariant probability measure $\mu$.

  Reasoning by contradiction, suppose that there exists an ergodic
  $F$-invariant measure $\mu$ such that their fibered Lyapunov
  exponents, listed by $\lambda_1\geq \cdots\geq \lambda_q$, are not
  all equal to zero.  If the measure is supported on finitely many
  fibers, then we immediately get a contradiction, because the
  periodic orbit condition clearly implies all Lyapunov exponents
  vanish.  If this is not the case, we consider a positive number
  $\delta$ which is smaller than the absolute value of one
  non-vanishing Lyapunov exponent.

  We claim that there exists an orbit segment $z_0'',\cdots,z_n''$
  with $\text{pr}_1(z_0'')=\text{pr}_1(z_n'')$, such that, if we
  denote the singular values of $D A^{(n)} (x_0'') (y_0'')$ by
  $e^{n\sigma_1}\geq \cdots\geq e^{n\sigma_d}$, then they satisfy
  $|\sigma_j-\lambda_j|\leq \delta$ for all $1\leq j\leq d$. In such a
  case, this linear map has at least one non-vanishing singular value,
  contradicting the periodic orbit condition (\ref{eq:POC}).

  So, it just remains to prove our claim.  Let $\kappa$ be a positive
  real number satisfying estimate \eqref{kappa}, and let
  $\alpha_1=\alpha_1(\delta/2)$ be the positive constant given by
  Lemma~\ref{singularapproximating}. Then, we apply
  Lemma~\ref{C1perturbation} getting a new constant
  $\alpha_2:=\alpha(e^{-\kappa})$; and we take
  $\alpha:=\min\{\alpha_1,\alpha_2\}$. Thus, let $\eta$ be a positive
  number satisfying \eqref{eta}. By Theorem~\ref{generalizedPesin}, we
  can choose a uniformity block $\Lambda_{\eta,\ell}$ with positive
  $\mu$-measure.  Finally, let $\varepsilon_0$ and $K$ be the positive
  constants given by Lemma~\ref{fiber-shadowing}.

  Then, we choose a point $z$ in $\Lambda_{\eta,\ell}$ which is a
  $\mu$-density point of $\Lambda_{\eta,\ell}$ and consequently, the
  ball $B(z, \varepsilon_0/2)$ intersects $\Lambda_{\eta,\ell}$ with
  positive $\mu$-measure.  By Poincar\'e's recurrence theorem, we can
  assume there are infinitely many natural numbers $n$ such that the
  $F$-orbit of $z$ satisfies 
  $z_0,z_n\in B(z,\varepsilon_0/2) \cap \Lambda_{\eta,\ell}$.
  
  Applying Lemma~\ref{fiber-shadowing}, there is an $F$-orbit segment
  $z_0'',z_1'',\cdots,z_n''$, with
  $\text{pr}_1(z_0'')=\text{pr}_1(z_n'')$, such that,
  \begin{equation}
    d(z_i,z_i'')< K d(z_0,z_n) e^{-\kappa\min \{i,n-i\} } \text{ for all }i=0,\cdots,n.
  \end{equation}

  Thus, by formula \eqref{deltaestimate2} we know
  that 
  \begin{equation}
    \|C_\eta(z_i)\circ F_{z_{i-1}}^{r^{(i-1)}}(z_{i-1},\cdot)\circ C_\eta^{-1}(z_{i-1})^{-1}- 
    B_\eta(z_{i-1}) \|_{C^1} <\alpha. 
  \end{equation} 

  So, if
  $\tilde\sigma_1^{(n)}\geq \tilde\sigma_2^{(n)}\geq \ldots\geq \tilde
  \sigma_q^{(n)}$ denotes the singular values of the linear map
  $C_{\eta}(z_n) \circ DA^n(x_0'')(y_0'') \circ C_{\eta}(z_0)^{-1}$,
  then invoking 
  Lemma~\ref{singularapproximating} for the constant $\delta /2$, we
  can guarantee that
  \begin{displaymath}
    \abs{\frac{1}{n}\log\tilde\sigma_j^{(n)}-\lambda_j}
    <\frac{\delta}{2}, 
  \end{displaymath}
  for each $i\in\{1,\ldots,q\}$. 

  Finally, note that the changes of Lyapunov coordinates at $z_0$ and $z_n$, namely
  $C_\eta(z_0)$ and $C_\eta(z_n)$ and their inverses, are bounded by $\ell$. So we can 
  apply Lemma~\ref{approximating} to conclude
  that, if
  $\sigma_1^{(n)}\geq \sigma_2^{(n)}\geq \ldots\geq \sigma_q^{(n)}$
  are the singular values of the linear map $DA^n(x_0'')(y_0'')$, then
  it holds
  \begin{displaymath}
    \abs{\frac{1}{n}\log\tilde\sigma_j^{(n)}-\lambda_j}
    <\delta, 
  \end{displaymath}
  for every $i$, completing the proof of the theorem.
\end{proof}

\bibliographystyle{amsalpha} 
\bibliography{references}

\end{document}